\documentclass[11pt,reqno,sort]{elsarticle}

\usepackage{amsmath,amssymb,amsfonts,amsthm,latexsym,graphicx,multirow,color,hyperref,enumerate, footnote}
\usepackage[all]{xy}

\usepackage{caption,stmaryrd}
\def\Box{\vcenter{\vbox{\hrule\hbox{\vrule
     \vbox to 8.8pt{\hbox to 10pt{}\vfill}\vrule}\hrule}}}

\newcommand{\eproof}{\hfill$\Box$\vspace{4mm}}

\newcommand{\PSL}{\mathrm{PSL}}

\newtheorem{thm}{Theorem}[section]
\newtheorem{lemma}[thm]{Lemma}
\newtheorem{cor}[thm]{Corollary}
\newtheorem{prop}[thm]{Proposition}

\numberwithin{equation}{section}
\newtheorem{remark}[thm]{Remark}

\newcommand{\cL}{\mathcal L}

\newcommand{\cP}{\mathcal P}
\newcommand{\cS}{\mathcal S}

\definecolor{Purple}{rgb}{0.5,0,0.5}

\def\Aut{{\rm Aut}}

 \def\PSL{{\rm PSL}}

\def\N{{\rm N}}
\def\C{{\rm C}}
\def\soc{{\rm soc}}

\begin{document}
\newcommand{\stopthm}{\begin{flushright}
		\(\box \;\;\;\;\;\;\;\;\;\; \)
\end{flushright}}
\newcommand{\symfont}{\fam \mathfam}

\title{On the multipliers of a Singer quadrangle}

\author[add1]{Wendi Di}\ead{diwd@ecust.edu.cn}
\author[add2]{Tao Feng \corref{cor1}}\ead{tfeng@zju.edu.cn}\cortext[cor1]{Corresponding author}
\address[add1]{School of Mathematics, East China  University of Science and Technology, Shanghai 200237, China}
\address[add2]{School of Mathematical Sciences, Zhejiang University, Hangzhou 310058,  China}

\begin{abstract}
A finite generalized quadrangle $\cS$ is a Singer quadrangle if it has an automorphism group that acts sharply transitively on its points. In this paper, we introduce the notion of multipliers for a Singer quadrangle and study their basic properties. As an application, we show that a point-primitive automorphism group of a thick generalized quadrangle cannot have O'Nan-Scott type HS (holomorph simple), which answers an open problem in \cite{Bamberg 2019}.
\newline
\noindent\text{Keywords:} 
generalized quadrangle, Singer quadrangle,  multiplier, primitive group
		
	\noindent\text{Mathematics Subject Classification (2020)}: 05B25 20B15 20B25
\end{abstract}

\maketitle

\section{Introduction}\label{Sec_int}
Let $\cS$ be a point-line incidence structure with point set $\cP$ and line set $\cL$, and assume that $\cP,\cL$ are both finite. It is a generalized $n$-gon if its incidence graph is a bipartite graph of diameter $n$ and girth $2n$. We say that a generalized $n$-gon $\cS$ has order $(s,t)$ if each point is incident with $t+1$ lines and each line is incident with $s+1$ points, and we say that it is thick  if both $s$ and $t$ are at least $2$. A finite thick generalized $3$-gon is simply a finite projective plane.

A Singer group of a generalized $n$-gon $\cS$ is an automorphism group of $\cS$ that acts sharply transitively on its points. For properties of Singer groups of a finite projective plane and their applications, we refer to \cite{Ho93basic,HoPAMS}. For brevity, we refer to a generalized quadrangle with a Singer group as a Singer quadrangle, following \cite{WTSSinger}. Ghinelli initiated the study of Singer quadrangles in \cite{Ghinelli}. She conjectured in \cite{Ghinelli} that a Singer quadrangle of even order does not exist, which was proved independently in \cite{Feng} and \cite{Ott}. Yoshiara \cite{Yoshiara} developed several fundamental lemmas on Singer quadrangles and showed that they cannot have order $(t^2,t)$. De Winter and K. Thas gave a satisfactory characterization of the generalized quadrangles with a Singer group that is abelian or an odd order Heisenberg group of dimension $3$ in \cite{Winter&Thas} and \cite{Winter&Thas2} respectively. The Singer group of a generalized quadrangle can have complicated structures and its nilpotency class can be arbitrarily large, cf. \cite{Bamberg2011,FengLi}. It seems intractable to determine all such groups that can act regularly on the points of a generalized quadrangle at this moment. In \cite{Swartz}, Swartz studied the Singer quadrangles of order $(u^2,u^3)$.

The concept of a multiplier for finite projective planes was introduced in \cite{Hall47}, and we refer to Ho's survey \cite{Ho93}. It has a natural generalization to generalized polygons as follows. Fix a point $p$. Let $G$ be a Singer group of a generalized $n$-gon $\cS$, and we identify the point set $\cP$ with $G$ via $p^g\mapsto g$ for $g\in G$. We say that an automorphism $\theta$ of $G$ is a multiplier of $\cS$ if it induces an automorphism of the geometry $\cS$.  In this paper, we conduct the first systematic study of  the properties of multipliers of a Singer quadrangle.

The study of highly transitive generalized polygons is an active area of research, cf. \cite{Bamberg2012,Bamberg2016,Bamberg Popiel2017,Bamberg2021,Lu,Zou} for some recent papers. The classification of finite simple groups and the O'Nan-Scott Theorem for primitive permutation groups play a fundamental role, and we take the notation for the O'Nan-Scott types as introduced in \cite{Praeger}. Bamberg et al. studied the point-primitive generalized quadrangles in \cite{Bamberg 2019}, and obtained restrictions for each nonaffine O'Nan-Scott type. They showed that the HC (holomorph compound) case does not occur, and showed confidence that the HS (holomorph simple) case is within reach. As pointed out in \cite[Section 8.1]{Bamberg 2019}, the generalized quadrangle has a regular automorphism group in the HS, SD (simple diagonal), CD (compound diagonal) and TW (twisted wreath) cases respectively. As an application of our results on multipliers, we are able to exclude the O'Nan-Scott type HS as expected and extend the results for the SD, CD cases in \cite{Bamberg 2019}. We summarize the results in the next theorem.
\begin{thm}\label{thm_PtPrim}
Let $G$ be an automorphism group of a finite  thick generalized quadrangle $\cS=(\cP,\cL)$ which is primitive on  $\cP$. Then the action of $G$ on $\cP$ does not have O'Nan-Scott type HS or HC, and the conditions in Table~\ref{tab:1} hold for the O'Nan-Scott type SD and CD.
\end{thm}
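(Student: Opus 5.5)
The plan is to reduce the HS case to a statement about multipliers of a Singer quadrangle. HC is already excluded in \cite{Bamberg 2019}, and the SD and CD conditions come from refining the arguments there. In type HS we have $\soc(G)=T\times T$ with $T$ nonabelian simple. The two factors $T_1=T\times 1$ and $T_2=1\times T$ are both regular normal subgroups. Identifying $\cP$ with $T$ via a base point $p$, we may take $T_1$ to act by right multiplication and $T_2$ by left multiplication. The point stabiliser $G_p$ contains $\mathrm{Inn}(T)$, acting on $\cP=T$ by conjugation. So $\cS$ is a Singer quadrangle with Singer group $T$, and every inner automorphism of $T$ is a multiplier fixing the point $p=1$. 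The task is to show that this cannot happen.

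First I would establish basic multiplier facts, analogous to Hall's theorem for projective planes. The key one concerns the fixed structure of a multiplier $\theta$: its fixed points $\C_T(\theta)$ form a subgroup, and these points, together with the lines they span, should form a substructure $\cS_\theta$. Since $\theta$ fixes the base point, $\cS_\theta$ is nonempty. By the standard classification of fixed substructures of a GQ automorphism, $\cS_\theta$ is one of the following: a set of pairwise noncollinear points, a set of points collinear with a common point (possibly with lines through it), a grid or dual grid, or a subquadrangle. Moreover, $\C_T(\theta)$ acts regularly on the points of $\cS_\theta$.

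Next I would apply this to $\theta$ an inner automorphism by an element $x$ of prime order $r$, so that $\C_T(\theta)=\C_T(x)\ni x$. Counting gives $|\cP|=(s+1)(st+1)=|T|$. I would use Yoshiara's lemmas on Singer quadrangles: for $g\ne1$, either $p$ and $p^g$ are collinear or the element has constrained behaviour, and $|T|$ is coprime to certain parameters. Comparing the orbit of $\langle x\rangle$ acting by conjugation with the line structure, the aim is to show that the lines through $p$ are permuted by $\mathrm{Inn}(T)$. Since $T$ is simple and $t+1<|T|$, this gives a nontrivial transitive-ish action of degree $t+1$. Separately, the $s+1$ points on a line through $p$ give a subset $L$ with $L^{-1}L$ closed under the relevant structure. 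The intended outcome is that the points collinear with $p$ form a union of conjugacy classes of size $(t+1)s$. This should force the stabiliser in $T$ of a line through $p$ to be a subgroup $H$ of order $s+1$, conjugation-invariant up to the $t+1$ lines. Then $T$ would have $H$ together with its conjugates partitioning something, which contradicts simplicity via a Frobenius-type argument or via Burnside's normal complement theorem.

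The main obstacle is the final contradiction, namely turning ``all inner automorphisms are multipliers'' into an impossibility for simple $T$. I would first try to show that each line through $1$ is a subgroup of $T$, using that it is fixed by $\C_T(x)$-conjugation for suitable $x$. These $t+1$ subgroups of order $s+1$ then pairwise intersect trivially and are permuted by conjugation. Next I would count elements: the lines contribute $(t+1)s$ nonidentity elements, and the remaining $s^2t$ elements are noncollinear with $1$. Using the GQ axiom, each such element $g$ gives $t+1$ collinearity paths, i.e.\ factorisations through these subgroups. This should force $T$ to be the union of the products $L_iL_j$, yielding a normal-subgroup or partition structure that is incompatible with $T$ being simple. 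If that route fails, I would fall back on CFSG bounds on $|\mathrm{Out}(T)|$ and on the arithmetic constraints $s+t\mid st(s+1)(t+1)$ against $|T|=(s+1)(st+1)$.
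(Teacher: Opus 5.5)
Your reduction of HS to a Singer quadrangle on which every inner automorphism of $T$ acts as a multiplier matches the paper's setup, and citing \cite{Bamberg 2019} for HC is what the paper does. There is, however, a genuine gap at the decisive step of the HS case. The facts you aim for, that $\Delta$ is a union of conjugacy classes and that each line $\ell$ through $1$ gives a subgroup $\bar\ell$ of order $s+1$, are already in \cite[Lemma 5.3]{Bamberg 2019}; they are where the work starts.

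Your proposed endgame yields nothing. Writing a point $g$ noncollinear with $1$ as $g=(gy^{-1})y$ with $gy^{-1},y\in\Delta$ just says the collinearity graph has diameter $2$, which holds in every Singer quadrangle. It produces no normal subgroup, Frobenius partition or transfer situation to which Burnside's theorem could apply. The fallback via $|\mathrm{Out}(T)|$ and $s+t\mid st(s+1)(t+1)$ is not an argument.

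The missing idea is a quantitative analysis of one well-chosen multiplier.
\begin{enumerate}
\item[(i)] Take an involution $u\in\bar\ell$ (also from \cite[Lemma 5.3]{Bamberg 2019}) and let $g$ be conjugation by $u$. Then $\cP_0(g)=C_T(u)$, and $g$ fixes the collinear points $1,u$.
\item[(ii)] Prove $\cP_1(g)=N_T(\bar\ell)\setminus C_T(u)$ and $N_T(\bar\ell)=C_T(u)\bar\ell$.
\item[(iii)] Combine this with $|\cP_1(g)|=|C_T(u)|\cdot|X\cap\Delta|$, with $|\cL_1(g)|=(t+1-c)|C_T(u)|$ (where $c$ is the constant number of fixed lines on a fixed point), and with Benson's identity $(1+t)|\cP_0|+|\cP_1|=(1+s)|\cL_0|+|\cL_1|$.
\item[(iv)] Add the bound $t\le s^2-s$. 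It comes from $s+t\mid 1+st$, which follows from Yoshiara's congruence $|x^T\cap\Delta|\,|C_T(x)|\equiv 1+st \pmod{s+t}$ together with class-invariance of $\Delta$.
\item[(v)] These rule out every fixed-structure type except a grid with parameters $(s_1,s)$. Hence $|C_T(u)|=(s+1)|C_T(u)\cap\bar\ell|$, and so $|N_T(\bar\ell)|=(s+1)^2$ for every line through $1$.
\item[(vi)] Counting the $b$ orbits of $\mathrm{Inn}(T)$ on the $t+1$ lines through $1$ gives $t+1=b(1+st)/(1+s)$, i.e.\ $s+t=(b-1)(1+st)$. This is impossible for $s,t\ge 2$.
\end{enumerate}

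The SD and CD half of the statement is not addressed at all. ``Refining the arguments'' of \cite{Bamberg 2019} does not lead to Table~\ref{tab:1}. What is needed is a general bound on centralizers of multipliers.
\begin{enumerate}
\item[(i)] The inputs are that $|\cP_0(\theta)|=|C_M(\theta)|$ divides $|M|=|\cP|$ and that each fixed point lies on the same number of fixed lines.
\item[(ii)] A case analysis of the fixed substructure then gives $|C_M(\theta)|<|M|^{3/4}$ whenever $\min\{s,t\}\ge 4$.
\item[(iii)] The condition $\min\{s,t\}\ge 4$ holds here: the quadrangles with $\min\{s,t\}\le 3$ are known, and their Singer groups are solvable, whereas $M$ is not.
\item[(iv)] Apply the bound to the multiplier induced by $(x,\dots,x)$ in a single diagonal block. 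This gives $|C_T(x)|<|T|^{1-r/4}$ for every $x\ne 1$, hence $r\le 3$.
\item[(v)] Explicit large centralizers then reduce $T$ to the list in the table: ATLAS data for sporadic groups, $3$-cycles in $\mathrm{Alt}(n)$, and transvection-type unipotent elements or suitable involutions in groups of Lie type.
\end{enumerate}
You would also need to exclude SD with $k=2$, which the paper handles with the HS argument.
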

 \begin{table}[h!]
\centering
\caption{}
\begin{tabular}{ccc}
\hline
        Type  &$\soc(G)$ & Possibilities on $T$  \\
\hline
        CD &$(T^k)^2$, $k\ge 2$ & $T\cong \rm{Alt}(n)$ with $ n\le 7$\\
 && $T\cong \rm{M}_{11}$, $\rm{M}_{12}$, $\rm{M}_{22}$, $\rm{M}_{23}$, Th, ${\rm J}_1$, O'N, $\rm{J}_3$, Ru, Ly\\
&& $T$ is of  Lie type ${}^2{\rm G}_2(q)$, ${}^2{\rm B}_2(q)$, $\PSL(2,q)$, $\PSL(3,q)$, $\rm{PSU}(3,q)$\\

\hline
SD& $T^k$, $k\geq 3$& $T\cong \rm{Alt}(n)$ with $ n\le 15$\\
&& $T$ is a sporadic simple group \\
 && $T$ is an exceptional  simple group with $T\ncong \rm{E}_8$\\
&& $T$ is of Lie type $\PSL(n,q)$ with $2\le n\le 7$, ${\rm PSU}(n,q)$ with $3\le n\le 6$, \\
&& ${\rm PSp}(2n,q)$ with $2\le n\le  3$, ${\rm \Omega}(7,q)$ or ${\rm P\Omega^\epsilon}(2n,q)$ with $4\le n\le 8$\\
\hline
\end{tabular}
\label{tab:1}
\end{table}

This paper is organized as follows. In Section 2, we present some preliminary facts on finite generalized quadrangles. In Section 3, we establish some basic properties of a multiplier of a Singer quadrangle. In Section 4, we show that a point-primitive automorphism group of a thick generalized quadrangle cannot have O'Nan-Scott type HS by using the results in Section 3. In Section 5, we complete the proof of Theorem \ref{thm_PtPrim} by excluding some potential point-primitive groups of type SD and CD listed in \cite[Theorem 1.1]{Bamberg 2019}.

\section{Preliminaries}\label{Sec_pre}
A \emph{grid} with parameters $(s_{1},s_{2})$ is a point-line incidence structure $(\mathcal{P}, \mathcal{L}, \mathrm{I})$ with point set $\mathcal{P}=\left\{x_{i, j}: 0 \le i\le s_1, 0 \le j\le s_2\right\}$ and line set $\mathcal{L}=\left\{\ell_{0}, \ldots, \ell_{s_{1}}, \ell_{0}^{\prime}, \ldots, \ell_{s_{2}}^{\prime}\right\}$ such that $x_{i, j} \mathrm{I} \ell_{k}$ if and only if $i=k$ and $x_{i, j} \mathrm{I} \ell_{k}^{\prime}$ if and only if $j=k$. A \emph{dual grid} with parameters $(s_1,s_2)$ is the point-line dual of a grid with parameters $(s_2,s_1)$. A grid with parameters $(s,s)$ is a thin generalized quadrangle of order $(s,1)$, and a dual grid with parameters $(t,t)$ is a thin generalized quadrangle of order $(1,t)$.\medskip

Let $\cS=(\cP,\cL)$ be a finite thick generalized quadrangle of order $(s,t)$, where $\cP$ is the point set and $\cL$ is the line set. For two distinct points $p,q$, we write $p\sim q$ if either $p=q$ or they are collinear. We define the relation $\sim$ dually for lines. For an automorphism $g$ of $\cS$, we define $\mathcal{P}_0(g)=\{p\in \mathcal{P}\mid p^g=p\}$ and
\[
  \mathcal{P}_1(g)=\{p\in \mathcal{P}\mid p^g\ne p,p^g\sim p\},\quad \mathcal{P}_2(g)=\{p\in \mathcal{P}\mid p^g\neq p,p^g\nsim p\}.
\]
They form a partition of $\cP$, and $\cP_0(g)$ is the set of fixed points. We define $\cL_0(g)$, $\cL_1(g)$ and $\cL_2(g)$ dually for lines, which form a partition of $\cL$. Let $\mathcal{S}_{g}=(\cP_0(g),\cL_0(g))$ be the induced incidence substructure on $\mathcal{P}_0(g)\times\mathcal{L}_0(g)$, and we call it the fixed substructure of $g$.\medskip

We state some standard facts from the monograph \cite{Payne} in the next few lemmas.
\begin{lemma}\emph{\cite[1.2.1-1.2.3]{Payne}}\label{lem_HD}
Let $\mathcal{S}$ be a finite thick generalized quadrangle of order $(s,t)$ with point set $\mathcal{P}$ and line set $\mathcal{L}$. Then
\begin{equation}\label{eq_size}
  |\mathcal{P}|=(s+1)(st+1),\,|\mathcal{L}|=(t+1)(st+1),
\end{equation}
 and the following properties hold:
\begin{enumerate}
\item[(i)] (Higman's inequality) $s\leq t^{2}$ and $t\leq s^{2}$;
\item[(ii)] (Divisibility condition) $s+t$ divides $st(s+1)(t+1)$.
\end{enumerate}
\end{lemma}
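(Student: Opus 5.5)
This lemma collects standard counting facts from \cite{Payne}, and I would prove them in the order stated, using only the axioms of a generalized quadrangle of order $(s,t)$. For a point $x$ and a line $L$ not incident with $x$, there is a unique point on $L$ collinear with $x$ and a unique line through $x$ meeting $L$.

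\emph{Counting points and lines.} Fix a point $p$. The points collinear with $p$ and distinct from $p$ number $(t+1)s$, since $p$ lies on $t+1$ lines, each with $s$ further points. For a point $q\nsim p$, pick the unique line $M$ through $q$ meeting a line $L$ through $p$. Each of the $t+1$ lines $L$ through $p$ gives $s$ choices of a point $r\neq p$ on $L$. Each such $r$ lies on $t$ lines other than $L$, and each of these contains $s$ points other than $r$. All these points are non-collinear with $p$, since otherwise we would obtain a triangle. So we get $(t+1)st$ triples $(L,r,q)$. Each $q\nsim p$ is counted exactly $t+1$ times, once for each line through $q$, because each line through $q$ meets exactly one line through $p$. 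Hence there are $st^2$ points not collinear with $p$. Summing gives
\[
1+s(t+1)+st^2=(s+1)(st+1)
\]
points. The count of lines follows by duality.

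\emph{Higman's inequality and divisibility.} For both parts I would use the collinearity graph. It is strongly regular with parameters
\[
v=(s+1)(st+1),\quad k=s(t+1),\quad \lambda=s-1,\quad \mu=t+1 .
\]
From these one computes the eigenvalues $k$, $s-1$ and $-t-1$. The multiplicity of $s-1$ is
\[
f=\frac{st(s+1)(t+1)}{s+t}.
\]
Integrality of $f$ gives (ii) directly.

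For (i) I would follow the standard Higman-style argument in \cite{Payne} rather than the Krein conditions. Fix non-collinear points $x,y$. Let $V$ be the set of points collinear with neither $x$ nor $y$, and for $z\in V$ let $t_z$ be the number of points collinear with all of $x,y,z$. Count $\sum_z 1$, $\sum_z t_z$ and $\sum_z t_z(t_z-1)$ by double counting. Applying Cauchy--Schwarz, $|V|\sum t_z^2\ge(\sum t_z)^2$, yields $(s-1)(t^2-s)\ge 0$, hence $s\le t^2$. The inequality $t\le s^2$ then follows by duality.

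The only delicate step is the second-moment count $\sum_z t_z(t_z-1)$. It counts pairs of distinct points of $\{x,y\}^\perp$ together with a point $z\in V$ collinear with both. This needs care, because a point collinear with two points of $\{x,y\}^\perp$ might fail to lie in $V$. Since the lemma is quoted from \cite{Payne}, it would suffice to cite 1.2.1--1.2.3 there, which is what the paper does.
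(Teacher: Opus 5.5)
\cite{Payne} is the paper's only support for this lemma; it gives no proof. Your plan matches the route taken there: direct counting, the multiplicity $f=st(s+1)(t+1)/(s+t)$ of the eigenvalue $s-1$ for (ii), and Higman's Cauchy--Schwarz argument for (i). Your treatment of (ii) is correct. Two computations, however, are wrong as written.

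\textbf{The point count.} The number of triples $(L,r,q)$ is
\[
(t+1)\cdot s\cdot t\cdot s=s^2t(t+1),
\]
counting choices of $L$, of $r$, of the second line through $r$, and of $q$ on it. You wrote $(t+1)st$ instead. Dividing by $t+1$ gives $s^2t$ points not collinear with $p$, not $st^2$. Your displayed identity $1+s(t+1)+st^2=(s+1)(st+1)$ is false unless $s=t$. The correct identity is $1+s(t+1)+s^2t=(s+1)(st+1)$.

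\textbf{Higman's argument.} For noncollinear \emph{points} $x,y$ one has $|V|=s^2t-st-s+t$, $\sum_z t_z=s(t^2-1)$ and $\sum_z t_z(t_z-1)=(t+1)t(t-1)$. Hence
\[
|V|\sum_z t_z^2-\Bigl(\sum_z t_z\Bigr)^2=(t^2-1)\,t\,(s-1)(s^2-t).
\]
So this count gives $t\le s^2$, not $s\le t^2$; the latter is its dual. Since you invoke duality anyway, (i) still follows once the roles are swapped. But the inequality you attribute to the computation is not the one it produces.

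The step you flagged as delicate is straightforward:
\begin{itemize}
\item Distinct $u,w\in\{x,y\}^\perp$ are noncollinear. Otherwise the absence of triangles would put both $x$ and $y$ on the line $uw$. Hence $|\{u,w\}^\perp|=t+1$.
\item Every $z\in\{u,w\}^\perp\setminus\{x,y\}$ lies in $V$. If $z\sim x$, then $z,u,x$ and $z,w,x$ would each lie on a line, both equal to the line $zx$, forcing $u\sim w$. The case $z\sim y$ is symmetric.
\end{itemize}
Thus each ordered pair $(u,w)$ contributes exactly $t-1$ points of $V$, giving $\sum_z t_z(t_z-1)=(t+1)t(t-1)$.
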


\begin{lemma}\emph{\cite[2.4.1]{Payne}}\label{subquadrangle}
Let $g$ be an automorphism of a finite thick generalized quadrangle $\mathcal{S}=(\mathcal{P}, \mathcal{L})$ of order $(s,t)$, and let $\mathcal{S}_{g}=(\cP_0(g),\cL_0(g))$ be the fixed substructure. Then one of the following holds:
\begin{enumerate}
\item[(0)] $\mathcal{P}_0(g)=\mathcal{L}_0(g)=\varnothing$;
\item[(1)] $\mathcal{L}_0(g)=\varnothing$, $\mathcal{P}_0(g)$ is a nonempty set of pairwise noncollinear points;
\item[(1')] $\mathcal{P}_0(g)=\varnothing$, $\mathcal{L}_0(g)$ is a nonempty set of pairwise nonconcurrent lines;
\item[(2)] $\mathcal{L}_0(g)$ is nonempty, and $\mathcal{P}_0(g)$ contains a point $P$ that is collinear with each other point of $\mathcal{P}_0(g)$ and is on each line of $\mathcal{L}_0(g)$;
\item[(2')]$\mathcal{P}_0(g)$ is nonempty, and $\mathcal{L}_0(g)$ contains a line $\ell$ that is concurrent with each other  line of $\mathcal{L}_0(g)$ and contains each point of $\mathcal{P}_0(g)$;
\item[(3)] $\mathcal{S}_{g}$ is a grid with parameters $(s_1,s_2)$, $s_{1}\le s_{2}$;
\item[(3')] $\mathcal{S}_{g}$ is a dual grid with parameters $(t_1,t_2)$, $t_{1}\le t_{2}$;
\item[(4)] $\mathcal{S}_{g}$ is a finite generalized quadrangle of order $\left(s',t'\right)$ with $\min\{s',t'\}\ge 2$.
\end{enumerate}
\end{lemma}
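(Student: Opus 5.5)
The plan is the standard argument from \cite[2.4.1]{Payne}: study the fixed substructure $\mathcal{S}_g$ directly, using only the axiom of a generalized quadrangle. For a point $p$ and a line $\ell$ not incident with it, there is a unique pair $(q,m)$ with $q\,\mathrm{I}\,\ell$, $p\,\mathrm{I}\,m$, $q\,\mathrm{I}\,m$. The key observation is that this pair is determined canonically by $(p,\ell)$. So if $p$ and $\ell$ are both fixed by $g$, then $q$ and $m$ are fixed as well. In the same way, if $p,p'$ are fixed noncollinear points and $\ell$ is a fixed line, then the projections of $p$ and $p'$ onto $\ell$ are fixed. In particular, if two fixed points are collinear, the line joining them is fixed, and if two fixed lines meet, their common point is fixed. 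Thus $\mathcal{S}_g$ is a closed substructure: any two of its points are collinear in $\mathcal{S}_g$ or not at all, and it contains no triangles, because $\mathcal{S}$ has none.

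Next I will split into cases according to whether $\mathcal{P}_0(g)$ and $\mathcal{L}_0(g)$ are empty.
\begin{itemize}
\item If both are empty, we are in (0).
\item If $\mathcal{L}_0(g)=\varnothing$ and $\mathcal{P}_0(g)\ne\varnothing$, no two fixed points can be collinear, since their joining line would be fixed. This gives (1), and (1') is the dual statement.
\end{itemize}
Now suppose both sets are nonempty. Pick a fixed line $\ell$ and a fixed point $p$. If $p$ is not on $\ell$, the projection argument produces a fixed flag. So in all cases there is a fixed incident pair.

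The main obstacle is the final classification when $\mathcal{S}_g$ contains flags. I would look at whether $\mathcal{S}_g$ contains an ordinary quadrilateral (an apartment), i.e.\ two noncollinear fixed points together with two fixed lines through each, meeting pairwise.
\begin{itemize}
\item \textbf{No apartment.} Then I claim every fixed line passes through a common fixed point $P$, or dually every fixed point lies on a common fixed line $\ell$. If there are two disjoint fixed lines, the projections of the fixed points on one line onto the other give fixed lines joining them, and an apartment appears unless the configuration degenerates into a single line carrying all fixed points, with the fixed lines concurrent on it. That degenerate configuration is (2'). Dually, fixed lines through a single point give (2). The care needed here is to check that a second fixed point off the line, or a second fixed line, always creates a quadrilateral.
\item \textbf{An apartment exists.} Then $\mathcal{S}_g$ is a subgeometry without triangles in which, by the projection closure, every antiflag $(p,\ell)$ has its unique connecting flag inside $\mathcal{S}_g$. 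Hence $\mathcal{S}_g$ is itself a (possibly thin) generalized quadrangle. It remains to show that every fixed line has the same number $s'+1$ of fixed points, and every fixed point is on the same number $t'+1$ of fixed lines. This is the standard regularity argument for subquadrangles: for opposite lines $\ell,\ell'$ of $\mathcal{S}_g$, projection gives a bijection between their fixed points, and connectivity through the apartment transfers this to all lines.
\end{itemize}

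Finally I would sort the apartment case by the parameters. If $t'=1$, $\mathcal{S}_g$ is a grid, which is (3), where the two parallel classes may have different sizes. If $s'=1$, it is a dual grid, which is (3'). Otherwise $\min\{s',t'\}\ge 2$ and we are in (4). In the grid case the uniformity argument only holds within each parallel class, which is why the parameters $(s_1,s_2)$ are allowed to differ.
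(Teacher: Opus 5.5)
The paper gives no proof of this lemma; it quotes \cite[2.4.1]{Payne}. Your strategy is the standard one: show that the fixed structure is closed under projections, then split according to whether $\mathcal{S}_g$ contains a fixed ordinary quadrilateral.

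Cases (0), (1), (1') and the quadrilateral-free case are fine, though sketchy. The quadrilateral-free case can be finished as follows.
\begin{itemize}
\item If there are two non-concurrent fixed lines, each carries exactly one fixed point. There is at least one by projection, and at most one because two would produce a fixed quadrilateral.
\item Projection shows that these two fixed points are collinear. Their joining line $N$ contains every fixed point, since a fixed point off $N$ would be collinear with both and give a triangle.
\item $N$ also meets every fixed line, because every fixed line carries a fixed point. This is (2').
\item Otherwise the fixed lines are pairwise concurrent, so they pass through one fixed point $P$. If there is only one fixed line, it carries all fixed points. Every fixed point is collinear with $P$, which gives (2).
\end{itemize}

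There is a genuine gap in the quadrilateral case. You assert that every fixed line carries the same number of fixed points, with ``connectivity through the apartment'' transferring equality from opposite lines to all lines. That is false. Two concurrent fixed lines can carry different numbers of fixed points, as in a grid with $s_1<s_2$, which (3) explicitly allows. Your closing remark concedes this, but you then sort by parameters $(s',t')$ whose existence you never established. Nothing in your proposal rules out a mixed structure, for example some fixed points on two fixed lines and others on three, together with a fixed line carrying three fixed points.

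The missing argument is short.
\begin{itemize}
\item \textbf{Non-degeneracy.} Projecting onto the sides of the quadrilateral shows that every fixed point lies on at least two fixed lines and every fixed line carries at least two fixed points. So $\mathcal{S}_g$ is a weak quadrangle in which opposite elements have equal valency.
\item \textbf{Valid transfer.} Let two fixed lines pass through a fixed point $a$ that also lies on a third fixed line $k$. Take a fixed point $y\neq a$ on $k$ and a fixed line $n\neq k$ through $y$; then $n$ is opposite both lines, so they carry equally many fixed points. This is the only valid transfer between concurrent lines.
\item \textbf{Thickness.} Suppose some fixed point $P$ lies on at least three fixed lines and some fixed line $K$ carries at least three fixed points. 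If needed, replace $K$ by a fixed line through $P$ opposite $K$, so that $P\,\mathrm{I}\,K$. Then every fixed line through $P$ is thick by the valid transfer. Every other fixed line is opposite one of them, so it is thick too. The dual argument makes every fixed point thick.
\item \textbf{Conclusion.} Now any two fixed lines are either opposite or meet in a point on at least three fixed lines. So $\mathcal{S}_g$ has an order $(s',t')$ with $s',t'\ge 2$, which is (4). If no fixed point lies on three fixed lines, $\mathcal{S}_g$ is a grid, which is (3). If no fixed line carries three fixed points, it is a dual grid, which is (3').
\end{itemize}
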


\begin{lemma}\emph{\cite[1.9.1, 1.9.2]{Payne}}\label{lem_bensonlem}
  Let $\mathcal{S}$ be a generalized quadrangle of order $(s,t)$ and let $g$ be an automorphism of $\mathcal{S}$.
   Then we have
    \begin{equation}\label{eq_Bensonlem}
    (1+t)|\mathcal{P}_0(g)|+|\mathcal{P}_1(g)|=(1+s)|\mathcal{L}_0(g)|+|\mathcal{L}_1(g)|\equiv (s+1)(t+1) \pmod {s+t}.
  \end{equation}
\end{lemma}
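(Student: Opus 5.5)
The statement to prove is Lemma~\ref{lem_bensonlem}, Benson's lemma. I will follow the standard linear-algebraic argument via the collinearity matrix. It has two parts: an exact equality obtained by double counting flags, and a congruence obtained from the eigenvalue structure of the point graph.

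\emph{The equality.} I will count the set $F$ of flags $(p,\ell)$ with $p\,\mathrm{I}\,\ell$ and $p^g\,\mathrm{I}\,\ell^g$ in two ways.
\begin{itemize}
\item Fix a point $p$. If $p\in\cP_0(g)$, every line through $p$ qualifies, giving $t+1$ flags. If $p\in\cP_1(g)$, then $p$ and $p^g$ lie on a unique common line $m$, and the condition $p^g\,\mathrm{I}\,\ell^g$ forces $\ell^g=m$, so exactly one line qualifies. If $p\in\cP_2(g)$, no line qualifies.
\item Fixing a line $\ell$ and arguing dually gives $(s+1)|\cL_0(g)|+|\cL_1(g)|$.
\end{itemize}
Equating the two counts of $|F|$ gives the stated equality.

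\emph{The congruence.} Let $A$ be the adjacency matrix of the collinearity graph on $\cP$, and let $P$ be the permutation matrix of $g$ on $\cP$. Then
\[
\mathrm{tr}(P)=|\cP_0(g)|,\qquad \mathrm{tr}(PA)=|\cP_1(g)|,
\]
since a diagonal entry $(PA)_{pp}$ equals $1$ exactly when $p^g\neq p$ and $p^g\sim p$. The graph is strongly regular with eigenvalues $s(t+1)$ (multiplicity $1$), $s-1$ and $-t-1$, and its eigenspaces are $P$-invariant because $P$ commutes with $A$. Write
\[
\mathrm{tr}(P)=1+a+b,\qquad \mathrm{tr}(PA)=s(t+1)+a(s-1)-b(t+1),
\]
where $a,b$ are the traces of $g$ on the two nontrivial eigenspaces. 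These traces are algebraic integers, and they are rational because each eigenspace is defined over $\mathbb{Q}$; hence $a,b\in\mathbb{Z}$. Substituting,
\[
(t+1)\mathrm{tr}(P)+\mathrm{tr}(PA)=(t+1)+s(t+1)+a(s+t)\equiv (s+1)(t+1)\pmod{s+t}.
\]

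The only step needing care is the integrality of $a$ and $b$. The key point is that they are traces of a finite-order matrix on rational subspaces, so each is an algebraic integer that is rational, hence an integer. Everything else is routine bookkeeping. The lemma is stated without thickness, but nothing above uses thickness, so the argument covers the general case.
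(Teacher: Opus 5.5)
Your argument is the standard proof of the cited result \cite[1.9.1, 1.9.2]{Payne}; the paper gives no proof of its own. You use a flag count for the equality and Benson's trace argument on the eigenspaces of the collinearity matrix for the congruence. The congruence part, including the integrality of $a$ and $b$, is correct as written.

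There is, however, a slip in the definition of $F$ that breaks the equality part as stated. Since $g$ is an automorphism, $p\,\mathrm{I}\,\ell$ already implies $p^g\,\mathrm{I}\,\ell^g$. So your $F$ is simply the set of all flags, with $|F|=(t+1)|\cP|$, and for this set your claim that a point of $\cP_1(g)$ lies in exactly one element of $F$ is false. To conclude $\ell^g=m$, your point count needs $\ell^g$ to be incident with both $p^g$ and $p$, i.e.\ $F=\{(p,\ell): p\,\mathrm{I}\,\ell,\ p\,\mathrm{I}\,\ell^g\}$. With this definition, fixing $p$ gives $t+1$, $1$, $0$ exactly as you argue. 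Fixing $\ell$ counts the points of $\ell$ that also lie on $\ell^g$, which is $s+1$, $1$ or $0$ according as $\ell\in\cL_0(g)$, $\cL_1(g)$ or $\cL_2(g)$. Alternatively, require $p^g\,\mathrm{I}\,\ell$ and count the points of $\ell$ that also lie on $\ell^{g^{-1}}$. With that correction the proof is complete.
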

\begin{lemma}\emph{\cite[2.2.1]{Payne}}\label{lem_subgq1}
  Let $\mathcal{S'}$ be a proper subquadrangle of $\cS$ with order $(s',t')$. Then either $s=s'$ or $s't'\le s$. Dually, we have either $t=t'$ or $s't'\le t$.
\end{lemma}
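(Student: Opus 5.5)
The plan is to argue directly in $\cS$, using only the quadrangle axiom (no triangles, and a point off a line is collinear with exactly one point of that line) and the point/line counts of \eqref{eq_size} applied to the subquadrangle $\cS'$. The second claim follows from the first by duality, so I only treat $s$. Assume $s'<s$; the goal is $s't'\le s$.

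\emph{Step 1: points outside $\cS'$.} Since $s'<s$, some line $L$ of $\cS'$ carries a point $x\notin\cS'$. Fix any point $z\notin\cS'$ and let $O_z$ be the set of points of $\cS'$ collinear with $z$.
\begin{itemize}
\item No line of $\cS'$ contains two points of $O_z$ unless $z$ lies on that line, since otherwise we get a triangle.
\item If $z$ lies on no line of $\cS'$, then $O_z$ meets every line of $\cS'$ in exactly one point, so $O_z$ is an ovoid of $\cS'$ and $|O_z|=1+s't'$.
\item If $z$ lies on a line $N$ of $\cS'$, then $O_z$ contains the $s'+1$ points of $N$, together with points off $N$ that I would control in the same way.
\end{itemize}

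\emph{Step 2: a partition count.} Choose a point $v$ of $\cS'$ and a line $M$ of $\cS$ through $v$ with $M\notin\cL'$; such a line exists because $t'<t$ is forced once some point of $M$ lies outside $\cS'$. Otherwise I use the line $L$ and the point $x$ above.

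Each of the $s'^2t'$ points of $\cS'$ not collinear with $v$ is collinear with a unique point of $M$. That point is not $v$, so these $s'^2t'$ points are partitioned by the sets $O_z\setminus\{v\}$, where $z$ runs over the $s$ points of $M\setminus\{v\}$.

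If $z$ is of ovoid type, it contributes exactly $s't'$ points. If all $z$ are of ovoid type, we get $s'^2t'=s\cdot s't'$, that is, $s=s'$.

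\emph{Step 3: the mixed case.} Suppose some $z\in M$ lies on a line $N$ of $\cS'$. Then I redo the count relative to $N$ and $z$, in the configuration of Step 1, counting the points of $\cS'$ opposite a fixed point of $N\cap\cS'$. A point $z'\notin\cS'$ on a line of $\cS'$ meets at most $1+s't'$ points of $\cS'$, and these sets are disjoint for distinct points on a common line outside $\cS'$. Comparing the total number of points of $\cS'$ (from \eqref{eq_size}) with $s$ times this maximal contribution should give $s\ge s't'$.

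I expect this last bookkeeping, which separates ovoid-type from line-type external points and shows that the latter force at least $s't'$ external points on a line, to be the main obstacle. My first attempt will be to count pairs $(z,u)$ with $z$ on a fixed external line through $x$ and $u\in O_z$ opposite a fixed point of $L\cap\cS'$.
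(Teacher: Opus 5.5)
The paper gives no proof of this lemma; it quotes it from Payne--Thas. So your argument has to stand on its own, and it does not. The second bullet of Step 1 is false in exactly the case you are treating. If $s'<s$ and $z\notin\cS'$ lies on no line of $\cS'$, then for a line $N$ of $\cS'$ the unique point of $N$ collinear with $z$ may be one of the $s-s'\ge 1$ points of $N$ outside $\cS'$. Hence $O_z$ is only a partial ovoid, not an ovoid.

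Step 2 collapses with this claim. In the configuration of Step 2, every $z\in M\setminus\{v\}$ automatically lies on no line of $\cS'$. The reason is that a point outside $\cS'$ on a line $N$ of $\cS'$ is collinear with exactly the $s'+1$ points of $N$ in $\cS'$ and with no other point of $\cS'$, since otherwise a triangle appears. Then $v\in N$ would force $N=M$. So the ``mixed case'' never occurs there, and your argument would prove that $t'<t$ forces $s=s'$. That is false: $W(q)\subset W(q^2)$ has $s'=t'=q<q^2=s=t$. What your count actually gives is $\sum_{z\in M\setminus\{v\}}(|O_z|-1)=s'^2t'$ with only $|O_z|-1\le s't'$, i.e.\ $s'\le s$. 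Separately, $s'<s$ does not force $t'<t$ (e.g.\ $W(q)\subset H(3,q^2)$), so your justification for the existence of $M$ is also wrong, although you do have a fallback.

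Step 3 is not yet an argument. The bound $1+s't'$ per external point cannot yield $s\ge s't'$, since partitioning $(s'+1)(s't'+1)$ points into $s+1$ parts of size at most $1+s't'$ only gives $s'\le s$. The natural setup is a line $K\ne L$ through $x\in L\setminus\cS'$. Such a line has no point of $\cS'$, so $\sum_{z\in K}|O_z|=(s'+1)(s't'+1)$. Points of $K$ on a line of $\cS'$ contribute exactly $s'+1$, not ``at most $1+s't'$''.

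What you then need is $|O_z|\le s'+1$ for the points $z\in K$ lying on no line of $\cS'$, and that is the entire content of the lemma. Averaged over the set $Z$ of all points outside $\cS'$ on no line of $\cS'$, this bound is equivalent to $s\ge s't'$, so no local bookkeeping of the kind you describe will give it for free.

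One way to close the gap is a second-moment count over $Z$. Write $|\mathcal{P}'|=(s'+1)(s't'+1)$ for the number of points of $\cS'$.
\begin{itemize}
\item Counting pairs $(z,u)$ with $z\in Z$ and $u$ a point of $\cS'$ collinear with $z$ gives $\sum_{z\in Z}|O_z|=|\mathcal{P}'|\,s(t-t')$.
\item For noncollinear points $u,v$ of $\cS'$, the $t-t'$ points of $\{u,v\}^\perp$ outside $\cS'$ all lie in $Z$. Hence $\sum_{z\in Z}|O_z|(|O_z|-1)=|\mathcal{P}'|\,s'^2t'(t-t')$.
\item The size of $Z$ is $|Z|=|\cP|-|\mathcal{P}'|-(t'+1)(s't'+1)(s-s')$.
\end{itemize}
For $t>t'$, the inequality $(\sum|O_z|)^2\le|Z|\sum|O_z|^2$ then simplifies to $(s-s't')(s-s')(st+s'^2t'^2)\ge 0$, which gives $s\ge s't'$ when $s>s'$. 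If $t=t'$, every line through a point of $\cS'$ is a line of $\cS'$, so $O_z=\varnothing$ for $z\in Z$. The count along $K$ then gives $(s+1)(s'+1)\ge(s'+1)(s't'+1)$ directly.
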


\begin{remark}\label{rem_smallpara}
When $\min\{s,t\}\le 3$, all the finite generalized quadrangles of order $(s,t)$ are known, cf. {\rm\cite[Chapter 6]{Payne}}. Up to duality, such a quadrangle is either one of the  classical  quadrangles $W(2)$, $W(3)$, $\mathcal{Q}(4,3)$, $\mathcal{Q}^-(5,2)$ and $\mathcal{Q}^-(5,3)$, or is the Payne derived quadrangle of $W(4)$. Among those quadrangles and their duals, there are exactly two Singer quadrangles: $\mathcal{Q}^-(5,2)$ and the Payne derived quadrangle of $W(4)$. For $\mathcal{Q}^-(5,2)$, the full automorphism group acts primitively on both points and lines (cf. {\em \cite[Theorem 1.1]{Bamberg2011})};  for the Payne derived quadrangle of $W(4)$, the full automorphism group  is primitive only on points (cf. {\em \cite[Corollary 1.5]{Bamberg2016}}).
\end{remark}

\section{The multiplier group of a Singer quadrangle}

Let  $\cS=(\cP,\cL)$ be a thick generalized quadrangle of order $(s,t)$, and suppose that it has an automorphism group $G$ that is regular on the point set $\cP$. Fix a point $p$, and we identify $\cP$ with $G$ via $p^g\mapsto g$, $g\in G$. The group $G$ acts on $\cP$ via right multiplication. Let
\begin{equation}\label{eqn_Deltadef}
  \Delta=\{g\in G\setminus\{1\}:1\sim g\}.
\end{equation}
This is slightly different from the definition in \cite{Yoshiara}, since we assume $1\not\in\Delta$ here. For a line $\ell$, we write $\bar{\ell}$ for the set of points collinear with $\ell$.\medskip

An automorphism $\theta$ of $G$ is a multiplier of $\cS$ if it induces an automorphism of the geometry $\cS$. The set of all multipliers of $\cS$ forms a subgroup of $\Aut(G)$, which we call the multiplier group of $\cS$ with respect to the Singer group $G$. For $\theta\in\Aut(G)$, let $C_G(\theta)=\{g\in G\mid g^\theta=g\}$.
\begin{prop}\label{prop_FixMult}
For a multiplier $\theta$,  let $X=\{g^{\theta}g^{-1}\mid g\in G\}$.
\begin{itemize}
  \item[(1)]We have $\cP_0(\theta)=C_G(\theta)$, $\cP_1(\theta)=\{g\in G:g^{\theta}g^{-1}\in\Delta\}$, and $|\cP_1(\theta)|=|C_G(\theta)|\cdot |X\cap\Delta|$.
  \item[(2)]There is a constant $c$ such that there are $c$ fixed lines incident with each point of $\cP_0(\theta)$.
\end{itemize}
\end{prop}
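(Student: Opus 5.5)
Everything hinges on the following setup. Under the identification $p^g\mapsto g$, the point $g$ is sent by $\theta$ to $g^\theta$. Since $G$ acts by right multiplication, $h\sim k$ holds if and only if $hk^{-1}\in\Delta\cup\{1\}$. Indeed, right-multiplying by $k^{-1}\in G$ is an automorphism of $\cS$ taking $k$ to $1$ and $h$ to $hk^{-1}$.

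For part (1), we have $g\in\cP_0(\theta)$ iff $g^\theta=g$, which gives $\cP_0(\theta)=C_G(\theta)$ at once. Similarly, $g\in\cP_1(\theta)$ iff $g^\theta\neq g$ and $g^\theta\sim g$, i.e. iff $g^\theta g^{-1}\in\Delta$, using that $1\notin\Delta$.

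For the count, consider the map $\phi:G\to X$, $g\mapsto g^\theta g^{-1}$. We check when two elements have the same image:
\[
g^\theta g^{-1}=h^\theta h^{-1}\iff (h^{-1}g)^\theta=h^{-1}g .
\]
To verify this, rewrite the left side as $h^{-\theta}g^\theta=h^{-1}g$, i.e. $(h^{-1}g)^\theta=h^{-1}g$. So the fibres of $\phi$ are exactly the left cosets $hC_G(\theta)$. Each fibre therefore has size $|C_G(\theta)|$, and
\[
|\cP_1(\theta)|=|\phi^{-1}(X\cap\Delta)|=|C_G(\theta)|\cdot|X\cap\Delta| .
\]

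For part (2), fix $g\in C_G(\theta)$ and let $\rho_g$ denote right multiplication by $g$. Then $\rho_g$ maps the point $1$ to $g$. Moreover $\rho_g$ commutes with $\theta$: for any $x$,
\[
(xg)^\theta=x^\theta g^\theta=x^\theta g .
\]
Hence $\rho_g$ is an automorphism of $\cS$ that normalizes (indeed centralizes) $\langle\theta\rangle$ in $\Aut(\cS)$. It therefore maps $\theta$-fixed lines to $\theta$-fixed lines: if $\ell^\theta=\ell$, then $(\ell^{\rho_g})^\theta=\ell^{\theta\rho_g}=\ell^{\rho_g}$.

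It follows that $\rho_g$ gives a bijection between the fixed lines through $1$ and the fixed lines through $g$. Taking $c$ to be the number of $\theta$-fixed lines through the point $1$, every point of $\cP_0(\theta)$ lies on exactly $c$ fixed lines.

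The only step needing care is the commutation $\rho_g\theta=\theta\rho_g$. This requires $g\in C_G(\theta)$, which is exactly why the constancy statement is restricted to $\cP_0(\theta)$.
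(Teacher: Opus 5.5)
Your proof is correct and follows essentially the same route as the paper. Part (1) uses the criterion $h\sim k\iff hk^{-1}\in\Delta\cup\{1\}$ and the fact that the fibres of $g\mapsto g^\theta g^{-1}$ are left cosets of $C_G(\theta)$; part (2) translates the fixed lines through $1$ by elements of $C_G(\theta)$, which commute with $\theta$, and you give slightly more justification than the paper does for why the translated lines remain $\theta$-fixed.
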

\begin{proof}
(1) We write $H=C_G(\theta)$ in this proof. It is clear that $\cP_0(\theta)=H$. Take $g\in G\setminus H$. We have $g\in\cP_1(\theta)$, i.e., $g^\theta\sim g$, if and only if  $g^\theta g^{-1}\in\Delta$. Therefore, we have $\cP_1(\theta)=\{g\in G:g^{\theta}g^{-1}\in\Delta\}$. For $g,h\in G$ such that $g^{\theta}g^{-1}=h^{\theta}h^{-1}$, we have $\left(g^{-1}h\right)^\theta=g^{-1}h$, i.e., $g^{-1}h\in H$. It follows that $|\cP_1(\theta)|=|H|\cdot |X\cap\Delta|$.

(2) For a fixed line $\ell$ incident with $1$ and an element $h\in H$, $\ell^h$ is a fixed line incident with the point $h$. This completes the proof.
\end{proof}

\begin{prop}\label{prop_FixMult2}
Take notation as in Proposition \ref{prop_FixMult}, and assume that $o(\theta)=2$ or $3$.
\begin{itemize}
  \item[(1)] The group $C_G(\theta)$ acts semiregularly on $\cL_1(\theta)$.
  \item[(2)] We have $|\mathcal{L}_0(\theta)|=\frac{|H|}{1+s}(c+|X\cap\Delta|)$,  $|\cL_1(\theta)|=(t+1-c)|C_G(\theta)|$.
\end{itemize}
\end{prop}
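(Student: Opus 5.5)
The plan is to write $H=C_G(\theta)$ and to use one structural fact throughout: the multiplier $\theta$ commutes with the action of every $h\in H$. Recall that $g\in G$ acts by right multiplication $\rho_g$ and that $\theta$ acts on $\cP=G$ directly. For $x\in G$ we have $x\rho_g\theta=(xg)^\theta=x^\theta g^\theta$, so $\rho_g\theta=\theta\rho_{g^\theta}$. In particular $\rho_h$ and $\theta$ commute for $h\in H$, and this transfers to their induced actions on $\cL$.

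\textbf{Step 1 (lines of $\cL_1(\theta)$ pass through fixed points).} Take $\ell\in\cL_1(\theta)$, so $\ell^\theta\neq\ell$ and $\ell^\theta\sim\ell$.
\begin{itemize}
\item If $o(\theta)=2$, then $\theta$ swaps $\ell$ and $\ell^\theta$. Hence $\theta$ fixes their unique common point $x$.
\item If $o(\theta)=3$, the lines $\ell,\ell^\theta,\ell^{\theta^2}$ are pairwise distinct. They are pairwise concurrent, since applying $\theta$ and $\theta^2$ to the concurrent pair $\ell,\ell^\theta$ gives the other two pairs. A generalized quadrangle has no triangles, so the three lines pass through a common point $x$. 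This $x$ is the unique point on both $\ell$ and $\ell^\theta$, so it is fixed by $\theta$.
\end{itemize}
In either case, every line of $\cL_1(\theta)$ is incident with a point of $\cP_0(\theta)=H$. That point is unique, because two distinct lines meet in at most one point. Conversely, any non-fixed line through a fixed point $x$ meets its image at $x$, so it lies in $\cL_1(\theta)$.

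\textbf{Step 2 (part (1)).} Suppose $h\in H$ fixes a line $\ell\in\cL_1(\theta)$. By the commutation, $h$ also fixes $\ell^\theta$, so it fixes the unique point $\ell\cap\ell^\theta$. Since $G$ is regular on points, $h=1$. Hence $H$ acts semiregularly on $\cL_1(\theta)$.

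\textbf{Step 3 (part (2)).} By Step 1, $\cL_1(\theta)$ is partitioned according to the unique fixed point on each line. By Proposition \ref{prop_FixMult}(2), each of the $|H|$ points of $\cP_0(\theta)$ lies on exactly $c$ fixed lines. So each such point lies on exactly $t+1-c$ lines of $\cL_1(\theta)$, which gives $|\cL_1(\theta)|=(t+1-c)|H|$.

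Now substitute into the identity of Lemma \ref{lem_bensonlem}, using $|\cP_0(\theta)|=|H|$ and $|\cP_1(\theta)|=|H|\cdot|X\cap\Delta|$ from Proposition \ref{prop_FixMult}(1):
\[
(1+s)|\cL_0(\theta)| = (1+t)|H|+|H|\cdot|X\cap\Delta|-(t+1-c)|H| = |H|\bigl(c+|X\cap\Delta|\bigr).
\]
This gives the formula for $|\cL_0(\theta)|$.

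The only delicate point is Step 1, which needs both the order hypothesis and the no-triangle property to guarantee that $\ell\cap\ell^\theta$ is fixed by $\theta$. For larger orders, $\ell\cap\ell^\theta$ need not be fixed, and the count in Step 3 would fail.
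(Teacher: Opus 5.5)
Your proof is correct and follows essentially the same route as the paper. The shared steps are: each line of $\cL_1(\theta)$ carries a unique $\theta$-fixed point (from the order hypothesis and the absence of triangles); $|\cL_1(\theta)|$ is counted via the $t+1-c$ non-fixed lines through each fixed point; and the result is substituted into Lemma \ref{lem_bensonlem}. Your semiregularity step, in which $h$ commutes with $\theta$ and so fixes $\ell\cap\ell^\theta$, is just a rephrasing of the paper's observation that $xh$ would be a second fixed point on $\ell$.
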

\begin{proof}
Let $H=C_G(\theta)$. Take $\ell\in\cL_1(\theta)$. If $o(\theta)=2$, there is a unique fixed point on $\ell$, i.e., $x=\ell\cap\ell^\theta$. If $o(\theta)=3$, $\ell,\ell^\theta,\ell^{\theta^2}$ are three distinct pairwise concurrent lines, so they must be incident with the same point $x$. In both cases, $\ell$ contains a unique fixed point $x$. If $\ell^h=\ell$ for some $h\in H$, then $xh$ is also a fixed point by $\theta$ on $\ell$, yielding that $h=1$. Hence $H$ is semiregular on the set $\cL_1(\theta)$. This proves (1).

Consider the induced subgraph $\Gamma'$ of the incidence graph of $\cS$ by the vertex set $\cL_1(\theta)\cup\cP_0(\theta)$, i.e., the bipartite graph such that $q\in\cP_0(\theta)$ is adjacent to  $\ell\in\cL_1(\theta)$ if and only if $q$ is incident with $\ell$. By Proposition \ref{prop_FixMult} (2), each vertex $q$ in $\cP_0(\theta)$ has $t+1-c$ neighbors in $\Gamma'$. By double counting the number of edges in $\Gamma'$, we deduce that $|\cL_1(\theta)|=(t+1-c)|H|$. By Lemma \ref{lem_bensonlem} and Proposition \ref{prop_FixMult}, we deduce that
$|H|(1+t+|X\cap\Delta|)=(1+s)|\mathcal{L}_0(\theta)|+(t+1-c)|H|$. This gives the desired expression for $|\mathcal{L}_0(\theta)|$. This proves (2) and completes the proof.
\end{proof}

\begin{thm}\label{thm:C_T(u)}
Suppose that $\mathcal{S}=(\mathcal{P},\mathcal{L})$ is a finite  thick generalized quadrangle of order $(s,t)$ admitting a point-regular automorphism group $G$. Let $\theta\in\Aut(G)$ be a multiplier, and write $H=C_G(\theta)$. Then either $H=1$, or $H>1$ and the fixed substructure $\cS_\theta=(\cP_0(\theta),\cL_0(\theta))$ is one of the following cases:
\begin{enumerate}
  \item [(a)] $\cS_\theta$  is a partial ovoid, and  $|H|\le 1+st$.
  \item [(b)] The points in $H$ are on the same line, and $|H|$ divides $1+s$.
  \item [(c)] $\cS_\theta$ is a grid with parameter $(s_1,s_2)$, and there are subgroups $Y_1,Y_2$ of $H$ such that $H=Y_1Y_2$, and $|Y_i|=s_i+1$, $s_i+1\mid s+1$, the points in $Y_i$ lie on a line  for $i=1,2$.
  \item[(c')]$\cS_\theta$ is a thin subquadrangle of order $(s_1,1)$ with $s_1\le \min\{s,t\}$, and $H=\langle X_1,g_1\rangle$ for a subgroup $X_1$ of order $\frac{s_1+1}{2}$, and the two fixed lines incident with $1$ intersect $\Delta$ in $X_1\cup g_1X_1$ and $g_1X_1g_1^{-1}\cup X_1g_1^{-1}$ respectively.
\item [(d)] $\cS_\theta$ is a thin subquadrangle of order $(1,t_1)$ with $2\le t_1\le \min\{s,t\}$, and $H$ has a subgroup of index $2$ consisting of pairwise noncollinear points.
\item [(e)] $\cS_\theta$ is a subquadrangle of order $(s',t')$ with $\min\{s',t'\}\ge 2$.
\end{enumerate}
\end{thm}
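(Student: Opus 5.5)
Assume $H>1$. We go through the cases of Lemma~\ref{subquadrangle} for $g=\theta$, and use that $H=C_G(\theta)=\cP_0(\theta)$ is a subgroup of $G$. Right multiplication by any $h\in H$ commutes with $\theta$, because $(xh)^\theta=x^\theta h$. So $H$ acts on $\cS_\theta$ as an automorphism group that is regular on $\cP_0(\theta)$. Since $1\in\cP_0(\theta)$, cases (0) and (1') cannot occur.

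\textbf{Cases (1), (2) and (2').} In case (1) the fixed points are pairwise noncollinear, so $\cS_\theta$ is a partial ovoid. An ovoid has $1+st$ points, hence $|H|\le 1+st$, which is (a). In case (2) the point $P$ collinear with all fixed points is unique whenever there are at least two fixed points (two noncollinear ones are excluded by the configuration). It is then $H$-invariant, so $H$ fixes $P$, and regularity forces $|H|=1$; this case is therefore excluded when $H>1$. In case (2') there is a distinguished line $\ell$ containing all fixed points. Either it is unique, or there is exactly one fixed line, which is then $\ell$. In both cases $\ell$ is $H$-invariant, so $H$ acts semiregularly on the $s+1$ points of $\ell$ and $|H|$ divides $1+s$, which is (b).

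\textbf{Grid and dual grid cases.} In case (3) the grid has two parallel classes of lines, of sizes $s_1+1$ and $s_2+1$. Let $\ell_1,\ell_2$ be the two fixed lines through $1$. Put $Y_i=\{h\in H: h\in\ell_i\}$, the stabilizer in $H$ of $\ell_i$; this is a subgroup. If $s_1\ne s_2$, the group $H$ preserves each parallel class. Then $Y_i$ is regular on the $s_i+1$ fixed points of $\ell_i$, and $H=Y_1Y_2$ because every grid point is determined by one line from each class. Also $s_i+1$ divides $s+1$, since $Y_i$ acts semiregularly on all $s+1$ points of $\ell_i$. If $s_1=s_2$, the index-$\le2$ subgroup of $H$ preserving both classes either equals $H$, giving (c), or has index $2$. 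In the latter case some $g_1\in H$ swaps the two classes, and we get (c') with $X_1$ the stabilizer of $\ell_1$. Then $|X_1|=(s_1+1)/2$ is forced by counting: $X_1$ fixes $\ell_1$ and acts semiregularly on its $s_1+1$ fixed points, and the points of $\ell_1$ split into the orbits $X_1\cup g_1X_1$. The description of $\ell_2$ follows by applying the inversion symmetry $x\mapsto x^{-1}$ relating lines through $1$. The bound $s_1\le\min\{s,t\}$ should come from Lemma~\ref{lem_subgq1} applied to the thin subquadrangle, where $s_1\le s$ is obvious and $s_1\le t$ has to be derived from the same counting. Case (3') is treated dually. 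The dual grid of order $(1,t_1)$ has its points split into two classes of pairwise noncollinear points, namely the two points on each fixed line. The subgroup of $H$ preserving the classes has index at most $2$. It cannot have index $1$, because $H$ is transitive on the fixed points and so must meet both classes; hence it has index exactly $2$, which is (d). The bound $t_1\le s$ is needed again and comes from Lemma~\ref{lem_subgq1}. Case (4) is (e) directly.

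\textbf{Main obstacle.} I expect the hardest part to be the precise structure in (c'), namely pinning down $\ell_1\cap\Delta$ and $\ell_2\cap\Delta$ as the stated cosets. I would first identify $\ell_1\cap H=X_1\cup g_1X_1$: the points of $\ell_1$ lying in $H$ are $1$ together with elements collinear to $1$, and these form a union of $X_1$-cosets. Then I would transport this description to $\ell_2$ via $\ell_2=\ell_1^{g_1^{-1}}$.
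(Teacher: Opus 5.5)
Your outline is sound and has the same skeleton as the paper's proof: run through the cases of Lemma~\ref{subquadrangle}. The local arguments are packaged differently, however.

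\textbf{Comparison of routes.} You make the right action of $H$ on $\cS_\theta$ (regular on $\cP_0(\theta)=H$) the main tool and argue by invariance. You use a canonical point in case (2), a canonical line in (2'), and the class-preserving subgroup $H_0$ of index at most $2$ in (3) and (3'). The paper instead uses the constant $c$ of Proposition~\ref{prop_FixMult}(2) to exclude (2) and to get $t_1=t_2$ in (3'). It then shows $X_2X_1\subseteq X_2$ to make $X_1$ a subgroup. In the grid case it splits according to whether some $g\in Y_1$ satisfies $\ell_1^{g^{-1}}=\ell_2$.

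Your treatment of (3') is arguably cleaner. Once $H_0$ has index $2$, regularity forces $H_0=X_1$ and $H\setminus H_0=X_2$ as point sets. This gives $|X_1|=|X_2|$ and the index-$2$ subgroup of pairwise noncollinear points at once. You should say this explicitly, since writing ``order $(1,t_1)$'' silently assumes $t_1=t_2$.

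\textbf{Case (2).} As written, your uniqueness claim for $P$ is false. If $\cL_0(\theta)$ is a single line containing every fixed point, then every fixed point qualifies as $P$. The correct dichotomy is as follows. Two candidates $P\ne P'$ force $\cL_0(\theta)=\{PP'\}$ and $\cP_0(\theta)\subseteq PP'$, which is the (2')/(b) situation. Otherwise $P$ is unique and your invariance argument gives $H=1$. The paper glosses over the same overlap.

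\textbf{Case (c').} This is the more substantive gap. Knowing that $\overline{\ell_1}\cap H$ is a union of left $X_1$-cosets does not show there are exactly two. Also, an arbitrary class-swapping $g_1$ need not satisfy $\ell_1^{g_1^{-1}}=\ell_2$, which your transport step uses.

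The missing observation is the paper's key step. For $g\in Y_1=\overline{\ell_1}\cap H$, the line $\ell_1^{g^{-1}}$ is a fixed line through $1$. It is $\ell_1$ if $g\in H_0$ and $\ell_2$ otherwise. Hence $Y_1\cap H_0=X_1$. Moreover, for $g,g'\in Y_1\setminus H_0$ we get $\ell_1^{g^{-1}g'}=\ell_1$. So $Y_1\setminus H_0$ is a single coset $g_1X_1$ whenever it is nonempty.

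It is nonempty. Otherwise $Y_1=X_1$, and then also $Y_2=X_2$: an element $g\in Y_2$ with $\ell_2^{g^{-1}}=\ell_1$ would give $g^{-1}\in Y_1\setminus X_1$. But then $H=X_1X_2\le H_0$, a contradiction. Alternatively, just adopt the paper's dichotomy, which builds in the existence of $g_1$.

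The same observation is what justifies identifying $Y_i$ with the stabilizer of $\ell_i$ in case (c). You assert that identification before the case split, where it is false for (c'). Once $g_1\in Y_1\setminus H_0$ is fixed, your transport $\overline{\ell_2}\cap H=(\overline{\ell_1}\cap H)g_1^{-1}=X_1g_1^{-1}\cup g_1X_1g_1^{-1}$ is shorter than the paper's product-set argument.

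\textbf{Minor points.} The bounds $s_1\le t$ and $t_1\le s$ need no counting. They follow directly from Lemma~\ref{lem_subgq1} applied to the thin subquadrangles of orders $(s_1,1)$ and $(1,t_1)$, since $s,t\ge 2$. Neither your sketch nor the paper checks that $H=\langle X_1,g_1\rangle$. It holds because each $h\in H$ lies on a fixed line through some $y\in Y_1$. Then $hy^{-1}\in Y_1\cup Y_2$, so $H=\langle Y_1\cup Y_2\rangle$.
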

\begin{proof}
Suppose that $|H|\ge 2$. There are at least two fixed points, and there is a constant $c$ such that there are $c$ fixed lines incident with each point of $\cP_0(\theta)$ by Proposition \ref{prop_FixMult} (2). This excludes the cases (0), (1') and (2) in Lemma \ref{subquadrangle}. If $\cS_\theta$ is a partial ovoid, then $|\cP_0(\theta)|\le 1+st$ by \cite[1.8.1]{Payne}, and we have case (a). If the points in $H$ lie on a line $\ell$, then $H$ stabilizes $\ell$ and $\bar{\ell}$ is the union of left $H$-cosets. It follows that $|H|$ divides $1+s$, and we have case (b). If $\cS_\theta$ is a subquadrangle, then we have case (e).


Assume that $\cS_\theta$ is a dual grid with parameters $(t_1,t_2)$, which is case (3') in Lemma \ref{subquadrangle}. There are disjoint sets $X_1$, $X_2$ of pairwise noncollinear points such that $|X_i|=t_i+1$ for $i\in\{1,2\}$ and $X_1\subseteq X_2^\perp$, $H= X_1\cup X_2$, and $\cL_0(\theta)$ consists of all the lines that are incident with a point of $X_1$ and a point of $X_2$ respectively. We assume without loss of generality that $1\in X_1$. We have $c=t_1+1=t_2+1$, so $t_1=t_2$. By \cite[1.4.1]{Payne}, we deduce that $t_1\le s$. It is clear that $t_1\le t$, so $t_1\le\min\{s,t\}$.  For $x_1\in X_1$ and $x_2\in X_2$, we deduce from $1\sim x_2$ that $x_1\sim x_2x_1$. It follows that $x_2x_1\in x_1^\perp\cap H=X_2$, so $X_2x_1\subseteq X_2$. We deduce that $X_2$ is the union of left cosets of $\langle X_1\rangle$, where $\langle X_1\rangle$ is the subgroup generated by $X_1$. By the fact $|X_1|=|X_2|$, we deduce that $X_1$ is a subgroup and $X_2$ is a coset of $X_1$. This yields case (d).

It remains to consider the case where $\cS_\theta$ is a grid with parameter $(s_1,s_2)$. Let $\ell_1,\ell_2$ be the two fixed lines incident with $1$, and define $Y_i=\overline{\ell_i}\cap H$ for $i=1,2$. We assume without loss of generality that $|Y_i|=s_i+1$ for $i=1,2$. We have $H\cap\Delta=Y_1\cup Y_2$. For $g\in Y_1$, the line $\ell_1^{g^{-1}}$ is a fixed line through $1$, so it is either $\ell_1$ or $\ell_2$. We consider two separate cases:
\begin{itemize}
  \item[(A)]$\ell_1^{g^{-1}}=\ell_1$ for each $g\in Y_1$,
  \item[(B)]$\ell_2^{g^{-1}}=\ell_2$ for some $g\in Y_1$.
\end{itemize}
In particular, if $s_1\ne s_2$, then only case (A) occurs. Suppose that (A) occurs. In this case, we deduce that $Y_1$ is the stabilizer of $\ell_1$ in $H$. It follows that $Y_1$ is a subgroup and $\overline{\ell_1}$ is the union of left $Y_1$-cosets, so $s_1+1\mid s+1$. Similarly, we deduce that $Y_2$ is a subgroup and $s_2+1\mid s+1$. We have $Y_1\cap Y_2=1$ by the fact $\overline{\ell_1}\cap\overline{\ell_2}=1$, so $H=Y_1Y_2$ by comparing sizes. This gives case (c). Suppose that (B) occurs. In this case, we have $s_1=s_2$. By the dual version of \cite[1.4.1]{Payne}, we deduce that $s_1\le \min\{s,t\}$. Let $X_i=\{g\in Y_i:\ell_i^{g^{-1}}=\ell_i\}$ and $Z_i=Y_i\setminus X_i$ for $i=1,2$. As in case (A), $X_i$ is the stabilizer of $\ell_i$ in $H$ and $Z_i$ is the union of left $X_i$-cosets. On the other hand, $\ell_i^{g^{-1}}=\ell_i$ for $g\in Y_i$ if and only if $g^{-1}$ is in $Y_i$, i.e., $X_i=\{g\in Y_i:g^{-1}\in Y_i\}$. We deduce that $Z_2=\{g^{-1}:g\in Z_1\}$. For $g\in Z_1$ and $h\in Z_2$, we have $\ell_1^{g^{-1}h^{-1}}=\ell_1$, so $g^{-1}h^{-1}\in X_1$. It follows that $\{g^{-1}h^{-1}:g\in Z_1,h\in Z_2\}\subseteq X_1$. By comparing sizes, we deduce that $Z_1=g_1X_1$ for some $g_1\in H$, and so $Z_2=X_1g_1^{-1}$. Similarly, we have $\{g^{-1}h^{-1}:g\in Z_2,h\in Z_2\}\subseteq X_2$, and so $X_2=g_1X_1g_1^{-1}$. This completes the proof.
\end{proof}

\begin{cor}\label{cor:2/3}
Let $\mathcal{S}=(\cP,\cL)$ be a generalized quadrangle of order $(s, t)$, where $\min\{s,t\}\ge 4$. If $G$ is a Singer group of $\cS$ and $\theta \in \text{Aut}(G)$ is a multiplier, then $|C_G(\theta)| < |G|^{3/4}$.
\end{cor}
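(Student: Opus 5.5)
Since $|G|=(1+s)(1+st)$, the natural approach is to go through the cases of Theorem~\ref{thm:C_T(u)} with $H=C_G(\theta)$ and bound $|H|$ in each one; the case $H=1$ is trivial. Throughout we use $s\ge 4$ and $t\ge 4$, and the inequality $1+s<(1+s)^{3/4}(1+st)^{3/4}$. Often the cruder bound $|H|\le |G|^{1/2}$ will already suffice, since $|G|^{1/2}<|G|^{3/4}$.

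\emph{Cases (a) and (b).} In case (a) we have $|H|\le 1+st<|G|^{3/4}$, because $1+st\le (1+st)^{3/4}(1+s)^{3/4}$ is equivalent to $(1+st)^{1/4}\le (1+s)^{3/4}$, i.e.\ $1+st\le (1+s)^3$; this holds since $t\le s^2$ by Higman's inequality (Lemma~\ref{lem_HD}). In case (b) we have $|H|\le 1+s$, which is much smaller than $|G|^{3/4}$.

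\emph{Cases (c), (c'), (d).} In case (c), $|H|=(s_1+1)(s_2+1)\le (1+s)^2$. We need $(1+s)^2<(1+s)^{3/4}(1+st)^{3/4}$, i.e.\ $(1+s)^{5/3}<1+st$. Using $s\le t^2$ gives $t\ge s^{1/2}$, so $st\ge s^{3/2}$, which is not quite enough. To close the gap I would use that $s_i+1\mid s+1$ with $s_i\ge 1$; if $s_i=s$ for both $i$, the grid consists of full lines. In that case each fixed point lies on two full fixed lines, so all points of those lines are fixed, and in a grid with parameters $(s,s)$ the order of $H$ is $(s+1)^2$. Comparing with Lemma~\ref{lem_bensonlem}, or applying Lemma~\ref{lem_subgq1} to the thin subquadrangle of order $(s,1)$, gives either $s=s$ (no information) or $s\le t$. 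With $t\ge s$ we get $1+st\ge 1+s^2>(1+s)^{5/3}$ for $s\ge 4$, as required. The genuinely hard subcase is $s_1=s_2=s$ with $t<s$; there I expect to need the bound $s_1\le \min\{s,t\}$ (available in case (c') via the dual of \cite[1.4.1]{Payne}, and in case (c) through an analogous argument on grids). With $s_i\le t$ we get $|H|\le (1+s)(1+t)$, and then $(1+s)(1+t)<|G|^{3/4}$ reduces to $(1+s)^{1/4}(1+t)<(1+st)^{3/4}$, which holds for $s,t\ge 4$. Case (c') gives $|H|=(s_1+1)^2$ with $s_1\le \min\{s,t\}$, so it is handled the same way. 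In case (d), $|H|=2(t_1+1)$, which is tiny.

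\emph{Case (e), the main obstacle.} Here $\cS_\theta$ is a proper subquadrangle of order $(s',t')$ with $|H|=(1+s')(1+s't')$. By Lemma~\ref{lem_subgq1}, either $s'=s$ or $s't'\le s$, and either $t'=t$ or $s't'\le t$; the subquadrangle is proper, so $s'=s$ and $t'=t$ cannot hold together. If $s't'\le s$, then $|H|\le (1+s)^2$ and we argue as above, using $s't'\le t$ as well when available. If $s'=s$ and $s t'\le t$, then $|H|\le (1+s)(1+t)$, which is again fine. The point needing care is ensuring $(1+s)^2<|G|^{3/4}$ when only $s't'\le s$ is known and $t$ may be as small as $\sqrt{s}$. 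I would try the combination $s'\le t'^2$ and $t'\le s'^2$ with $s't'\le s$ to get $1+s'\le (1+s)^{2/3}$ roughly, hence $|H|\le (1+s)^{5/3}$, which is below $(1+s)^{3/4}(1+st)^{3/4}$ as soon as $st\ge (1+s)^{11/9}$; this follows from $t\ge\sqrt{s}$ and $s\ge 4$.
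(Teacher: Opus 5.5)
You handle cases (a)--(d) essentially as the paper does, but there is a genuine gap in case (e) when $s'=s$. There you bound $|H|=(1+s)(1+st')\le(1+s)(1+t)$ using $st'\le t$ and call this ``fine''. That rests on your earlier claim that $(1+s)^{1/4}(1+t)<(1+st)^{3/4}$ for all $s,t\ge 4$, which is false once $t$ is large compared with $s$. For example, at $(s,t)=(4,16)$ one has $(1+s)(1+t)^4=417605>274625=(1+st)^3$.

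In this subcase $t\ge st'\ge 2s$, so you are exactly in the regime where the inequality fails. No sharper purely parametric estimate can rescue it. The values $(s,t,s',t')=(4,16,4,4)$ satisfy every constraint you use; they occur geometrically as $Q(4,4)\subset Q^-(5,4)$. They give $(1+s)(1+st')=85>325^{3/4}\approx 76.5$, where $325=(1+s)(1+st)$. In case (c) you only need that inequality for $t<s$, where it does hold, so that part survives.

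The missing idea is the one the paper stresses right after the corollary: $H=C_G(\theta)$ is a subgroup of $G$, so $|H|$ divides $|G|$. When $s'=s$ this gives $1+st=m(1+st')$ for an integer $m$. Here $m>1$ because $t'<t$, and reducing modulo $s$ gives $m\equiv 1\pmod s$. Hence $m\ge 1+s$, so $|H|=(1+s)(1+st')\le 1+st$, and the bound from case (a) finishes the argument. In the example above, $85\nmid 325$. Without this divisibility input your case analysis cannot reach the exponent $3/4$.

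Two smaller points:
\begin{itemize}
\item In case (c), the ``genuinely hard subcase $s_1=s_2=s$ with $t<s$'' is empty, since you have just derived $s\le t$ there. The subcase you actually need is $s_1<s$ with $t<s$. The bound $s_1\le t$ there follows directly from Lemma~\ref{lem_subgq1} applied to the $(s_1,s_1)$ subgrid, which is a proper subquadrangle of order $(s_1,1)$; no separate grid argument is needed.
\item In case (e) with $s't'\le s$, the estimate $1+s'\le(1+s)^{2/3}$ is not literally true, because $1+s^{2/3}>(1+s)^{2/3}$. The precise inequality $(1+s^{2/3})^4(1+s)<(1+st)^3$ still holds, but at $s=4$ it needs $t\ge 4$ and not merely $t\ge\sqrt{s}$.
\end{itemize}
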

\begin{proof}
We set $H = C_G(\theta)$. The claim holds trivially if $H=1$, so assume that $|H| \ge 2$. By Theorem \ref{thm:C_T(u)}, $\mathcal{S}_\theta$ is one of the cases (a)-(e). For (b), $|H|\le 1+s$ and so $|H|<|G|^{1/2}$. For (d), we have $|H| = 2(1+t_1)$ with $2\le t_1\le \min\{s,t\}$, so $|H|\le 2(1+s) < |G|^{1/2}$ by the assumption $\min\{s,t\}\ge 4$. We examine the remaining cases in the following.

For (a), $\mathcal{S}_\theta$ is a partial ovoid, and $|H| \le 1+st$. We have $t\le s^2$ by Lemma \ref{lem_HD} $(i)$, so $1+st<(1+s)^3$. It follows that
\[
 |H|^4 \le (1+st)^4 < (1+s)^3(1+st)^3 = |G|^3,\quad\textup{i.e., } |H|<|G|^{3/4}.
\]

For (c), $\cS_\theta$ is a grid with parameter $(s_1,s_2)$, where $s_i\le s$ for $i=1,2$. We assume without loss of generality that $s_1\le s_2$, so that $\cS_\theta$ contains a subquadrangle of order $(s_1,1)$. We first consider the case $s_1=s_2=s$. We have $s\le t$ by \cite[2.2.2(i)]{Payne}. It is straightforward to verify that $(1+s)^5<(1+st)^3$ when $\min\{s,t\}\ge 4$. It follows that
\[
  |H|^4 =(1+s)^8 <(1+s)^3(1+st)^3=|G|^3,\quad\textup{i.e., } |H|<|G|^{3/4}.
\]

We next consider the case $s_1<s$. We have $s_1\le \min\{s,t\}$ by Lemma \ref{lem_subgq1}, so
\begin{equation}\label{eqn_tt111}
  |H|\le (1+s)(1+\min\{s,t\}).
\end{equation}
If $t \ge s$, then we deduce that $|H|<|G|^{3/4}$ as in the case $s_1=s_2=s$.  If $t<s$, it holds that $(1+t)^4(1+s)<(1+st)^3$ upon direct check. It follows that
\[
  |H|^4 \le (1+t)^4(1+s)^4 <(1+s)^3(1+st)^3=|G|^3,
\]
i.e., $|H|<|G|^{3/4}$. The case (c') is handled similarly as in the case (c).

It remains to consider (e), where $\mathcal{S}_\theta$ is a subquadrangle of order $(s', t')$ with $\min\{s',t'\}\ge 2$. First assume that $s'<s$ and $t'<t$, so that $s't'\le \textup{min}\{s,t\}$ by Lemma \ref{lem_subgq1}. If $s< t$, we have $s'\le s^{2/3}$ by the facts $s'^{1/2}\le t'$ and $s'^{3/2}\le s't'\le s$. It holds that
\[
  |H|=(1+s')(1+s't')\le (1+s^{2/3})(1+s)\le(1+s)^{3/4}(1+s^2)^{3/4}<|G|^{3/4},
\]
where the second inequality holds for all $s\ge 3$. If $s\ge t$, then $s't'\le t$, and it is routine to check that $(1+s)(1+t)^4<(1+st)^3$ holds for all $(s,t)$ pairs such that $4\le t\le s\le t^2$. It follows that
\[
 |H|^4\le (1+s)^4(1+t)^4\le(1+s)^{3}(1+st)^{3}=|G|^{3},\textup{ i.e., }|H|<|G|^{3/4}.
\]
We next consider the case $t=t'$. We have $s't\le s$ by Lemma  \ref{lem_subgq1}, and so
\[
  |H|^3=(1+s')^3(1+s't)^3\le (1+s/t)^3(1+s)^3\le(1+s)^{2}(1+st)^{2}< |\cP|^{2},
\]
i.e., $|H|<|\cP|^{2/3}$. The second inequality here holds for all pairs $(s,t)$ such that $4\le t\le s\le t^2$, and we omit the details. Finally, we consider the case $s'=s$. We have $st'\le t$ by Lemma  \ref{lem_subgq1}, so $t'\le t/s\le s$.  Since $|H|$ divides $|G|$, we deduce that $1+st=m(1+st')$ for some integer $m>1$. We have $m\equiv 1\pmod{s}$ by taking modulo $s$ on both sides,  so $m=1+ns$ for some $n\in\mathbb{N}$. It follows that $1+st\ge (1+st')(1+s)$. We have $1+st'<(1+s)^2$ by the fact $t'\le s$, and
it is now routine to show that $|H|<|G|^{3/4}$. This completes the proof.
\end{proof}

Corollary \ref{cor:2/3} is an improvement of \cite[Theorem 1.3]{Bamberg 2019} about the size of the set $P_0(\theta)$ of fixed points when the collineation $\theta$ is a multiplier. This is possible because $|P_0(\theta)|=|C_G(\theta)|$ divides $|\cP|=|G|$, where $G$ is the associated Singer group.

\section{Point-primitive automorphism group of  HS type}\label{Sec_proof}

In this section, we establish the following result.
\begin{thm}\label{thm_HSnonext}
If $\cS$ is a finite thick generalized quadrangle and $G$ is point-primitive automorphism group, then $G$ is not of O'Nan-Scott type HS on the point set.
\end{thm}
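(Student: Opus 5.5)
Suppose for a contradiction that $G$ is point-primitive of type HS. Then $\soc(G)=T\times T$ for a nonabelian simple group $T$, and the points can be identified with $T$. The two factors act on $\cP$ by right and left multiplication, and the stabilizer of the point $1$ contains the diagonal subgroup $\{(x,x):x\in T\}$, which acts on $T$ by conjugation. So $T_R=\{t\mapsto tx\}$ is a Singer group of $\cS$. For every $x\in T$, the inner automorphism $\iota_x:g\mapsto x^{-1}gx$ of $T_R$ is realized by the point stabilizer, and is therefore a multiplier. Its centralizer is $C_T(x)$, so Proposition~\ref{prop_FixMult}, Proposition~\ref{prop_FixMult2}, Theorem~\ref{thm:C_T(u)} and Corollary~\ref{cor:2/3} all apply to $\theta=\iota_x$ with $H=C_T(x)$.

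First I deal with small parameters. By Remark~\ref{rem_smallpara}, if $\min\{s,t\}\le 3$ the only Singer quadrangles are $\mathcal{Q}^-(5,2)$, with $|\cP|=27$, and the Payne derived quadrangle of $W(4)$, with $|\cP|=64$. Neither $27$ nor $64$ is the order of a nonabelian simple group, so we may assume $\min\{s,t\}\ge 4$.

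Corollary~\ref{cor:2/3} then gives $|C_T(x)|<|T|^{3/4}$ for every nonidentity $x\in T$; this bound alone will not give a contradiction, so I bring in the Benson-type count. Take $x$ to be an involution. Then $\theta=\iota_x$ has order $2$, and $X=\{g^{-1}x^{-1}gx\}$ is the set of elements $x^g x$ with $g\in T$. Proposition~\ref{prop_FixMult2} gives
\[
|\cL_0(\theta)|=\frac{|C_T(x)|}{1+s}\,\bigl(c+|X\cap\Delta|\bigr).
\]
Lemma~\ref{lem_bensonlem} gives
\[
|C_T(x)|\,\bigl(1+t+|X\cap\Delta|\bigr)\equiv (s+1)(t+1)\pmod{s+t}.
\]
These give divisibility constraints involving $|C_T(x)|$, $s$ and $t$. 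The aim is to play them off against the cases of Theorem~\ref{thm:C_T(u)}. Since $C_T(x)$ contains $x$ and $x\ne 1$, we have $H\neq 1$. Centralizers of involutions in simple groups are large: Brauer--Fowler gives $|T|<|C_T(x)|^3$ roughly, so $|C_T(x)|\gtrsim |T|^{1/3}$. Compare this with the structure forced in each case.
\begin{itemize}
\item[(b)] Here $|C_T(x)|$ divides $1+s$.
\item[(c), (c$'$), (d)] Here $C_T(x)$ is a product of two subgroups of order dividing $s+1$, or has an index-$2$ subgroup of pairwise noncollinear points.
\item[(a), (e)] Here the fixed structure is a partial ovoid or a subquadrangle.
\end{itemize}

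A cleaner uniform route that I would try first is as follows. The conjugation action makes $\Delta$ a union of $T$-conjugacy classes, because $\Delta$ is stabilized by the point stabilizer of $1$. Every line through $1$ is a subset $\bar\ell\setminus\{1\}$ of $\Delta$ of size $s$. The lines through $1$ are permuted by $\mathrm{Inn}(T)$, so there are at most $t+1$ such lines, and a nontrivial permutation action gives $|T:T_\ell|\le t+1$ for the stabilizer $T_\ell$ of a line $\ell$ through $1$. Moreover, $T_\ell$ normalizes the set $\bar\ell\cap\ldots$. Using Theorem~\ref{thm:C_T(u)}-type arguments, the point set $\{1\}\cup(\bar\ell\setminus\{1\})$ of a line fixed by conjugation by $T_\ell$ is closed under multiplication by its own elements, as in case (A) of that proof. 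This suggests $\bar\ell$ is a subgroup normalized by $T_\ell$.

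The main obstacle is to turn this into a contradiction. I expect to combine $|T:T_\ell|\le t+1$, $|\bar\ell|=s+1$ dividing $|T|$, $|T|=(1+s)(1+st)$, and the constraint $s\le t^2$ (Lemma~\ref{lem_HD}). The goal is to show that $T$ has a subgroup of index at most $t+1$ together with many conjugacy classes of specific sizes inside $\Delta$, where $|\Delta|=s(t+1)$. This forces $|T|$ to be far too small compared with the minimal index of a subgroup. Concretely, I would try to show that $T$ acts nontrivially, and hence faithfully, on the $t+1$ lines through $1$, which gives $|T|\le (t+1)!$. That bound alone is insufficient, so the hard step is to use the involution count above to force $|C_T(x)|$ into case (b) or (d), where it is at most about $2(1+s)$. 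Then Brauer--Fowler, $|T|\le|C_T(x)|^3$, gives $(1+s)(1+st)\le 8(1+s)^3$, so $t\lesssim 8s$. Combined with the divisibility conditions, this should yield the contradiction.
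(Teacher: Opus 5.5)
There is a genuine gap. After the correct setup and the reduction to $\min\{s,t\}\ge 4$, nothing in your proposal produces a contradiction, and the step you call the hard step is essentially the entire proof. Four specific problems:
\begin{itemize}
\item[(i)] You work with an arbitrary involution $x$, but nothing makes $x$ collinear with $1$. Then $\cS_{\iota_x}$ may be a partial ovoid (case (a)), and Theorem~\ref{thm:C_T(u)} gives nothing usable. The paper needs property (B2): for every line $m$ through $1$, $\bar m$ is a subgroup of $T$ containing an involution. This is what lets it choose an involution $u\in\bar\ell$, so that $1,u$ are collinear fixed points and $\ell$ is a fixed line. You only ``suggest'' the subgroup property and never obtain an involution on a line through $1$.
\item[(ii)] Brauer--Fowler yields \emph{some} nonidentity element whose centralizer has order exceeding $|T|^{1/3}$. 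It says nothing about the particular involution you chose.
\item[(iii)] Your endgame cannot work as stated. The pair $(s,t)=(5,7)$ satisfies Higman's inequality, the divisibility condition, the stronger $s+t\mid 1+st$, $s+2\le t\le s^2-s$, and $t\le 8s$. So inequalities of this kind alone give no contradiction.
\item[(iv)] The target of your hard step is the wrong one. Cases (b), (d) and (e) are exactly the ones that are easy to exclude. The paper kills (b) by an orbit count giving $t+1\ge 1+st$. It kills (d) and (e) by showing the counts force $s'=s$, which is impossible in (d) and leads to $t=s^2$ in (e), excluded by $s+t\mid 1+st$ (from Yoshiara's congruence and (B1)). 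The case that survives the local analysis is the grid (c), which your plan never addresses.
\end{itemize}

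The missing idea is the link between the multiplier $g=(u,u)$, $H=C_T(u)$, and the stabilizer of $\ell$. The paper shows that every fixed line of $g$ is a right translate, by an element of $H$, of a line through $1$. From this it deduces $\cP_1(g)=N_T(\bar\ell)\setminus H$ and $N_T(\bar\ell)=H\bar\ell$, which determines $|X\cap\Delta|$ exactly. Substituting into Proposition~\ref{prop_FixMult2}(2) rules out (d) and (e). In the grid case it forces the parameters to be $(s_1,s)$, so $|C_T(u)|=(s+1)|C_T(u)\cap\bar\ell|$ and hence $|N_T(\bar\ell)|=(s+1)^2$ for every line $\ell$ through $1$. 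Your $T_\ell$ is exactly $N_T(\bar\ell)$. Counting the $t+1$ lines through $1$ in $b$ orbits of the diagonal group then gives $(t+1)(s+1)=b(1+st)$, i.e.\ $s+t=(b-1)(1+st)$. This is impossible because $0<s+t<1+st$ for a thick quadrangle. Your bound $|T:T_\ell|\le t+1$ is only a crude version of this count. The proof needs the exact value of $|T_\ell|$, which comes from the involution on $\ell$, not from size estimates on centralizers.
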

We fix the following notation throughout this section. Let $\cS=(\cP,\cL)$ be a finite thick generalized quadrangle of order $(s,t)$ and $G$ is an automorphism group that acts primitively on the point set $\cP$ with O'Nan-Scott type HS (holomorphism simple). There is a nonabelian simple group $T$ such that $G$ has socle $T\times T$, and there is a point $p\in\cP$ whose stabilizer is $D=\{(g,g):g\in T\}$. The group $M=\{(g,1):g\in T\}$ acts regularly on $\cP$, and we identify $\cP$ with $T$ via $p^{(g,1)}\mapsto g$ for $g\in T$. The group $T\times T$ then acts on $\cP= T$ as follows:
\[
  y^{(g_1,g_2)}=g_2^{-1}yg_1,\quad \textup{ for }y,g_1,g_2\in T.
\]
In particular, the group $D$ acts on $\cP=T$ as a group of multipliers. For a line $\ell$, we write
\[
  \bar{\ell}=\{g\in T:\,\textup{ the point $g$ is incident with } \ell\}.
\]
We write $\Delta$ for the set of points collinear but not equal to $1$.  By \cite[Lemma 5.3]{Bamberg 2019}, we have the following important properties:
\begin{itemize}
  \item[(B1)]$\Delta$ is a union of conjugacy classes of $T$,
  \item[(B2)]For each line $m$ incident with $1$, $\bar{m}$ is a subgroup of $T$ and it contains an involution.
\end{itemize}
We fix a line $\ell$ incident with $1$, and take an involution $u$ in $\bar{\ell}$. We set $g=(u,u)\in D$, which is a multiplier of $\cS$. It is clear $\cP_0(g)=C_T(u)$.
\begin{lemma}\label{lem_typeHS}
We have $s+t\mid 1+st$, and $s+2\leq t\leq s^2-s$.
\end{lemma}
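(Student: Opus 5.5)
The plan is to extract the divisibility $s+t\mid 1+st$ from Lemma~\ref{lem_bensonlem}, applied to suitable elements of $T\times T$. The numerical bounds on $t$ should then follow by elementary arithmetic together with Lemma~\ref{lem_HD}.

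\emph{Step 1 (the translation $(u,1)$).} First apply Lemma~\ref{lem_bensonlem} to $(u,1)$, which acts on $\cP=T$ by $y\mapsto yu$. Left multiplications $y\mapsto xy$ are also induced by $T\times T$, namely by $(1,x^{-1})$. Hence $yu\sim y$ if and only if $u\sim 1$, which holds because $u\in\bar\ell$. So $\cP_0=\varnothing$ and $\cP_1=\cP$.

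For the lines, (B2) says that every line $m$ satisfies $\bar m=yK$ with $K=\bar m_0$ a subgroup, where $m_0$ is a well-defined line through $1$. The line $m$ is fixed by $(u,1)$ exactly when $Ku=K$, that is, when $u\in K$. This forces $m_0=\ell$, so $|\cL_0|=|T|/(s+1)=1+st$. Benson's identity then forces $\cL_1=\varnothing$, and the congruence yields $s+t\mid (1+s)t(s-1)$.

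\emph{Step 2 (the multiplier $g=(u,u)$).} By itself Step 1 is too weak, so it will be combined with the same computation for the multiplier $g=(u,u)$, using Propositions~\ref{prop_FixMult} and~\ref{prop_FixMult2}. Here $H=C_T(u)$ and $X=\{uu^y: y\in T\}$. The set $X\cap\Delta$ is controlled through (B1), since $\Delta$ is a union of conjugacy classes. The constant $c\ge1$ counts the lines through $1$ whose subgroup is normalised by $u$; in particular $\ell$ is such a line.

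Benson's congruence for $g$ then gives
\[
  |H|(1+t+|X\cap\Delta|)\equiv (1+s)(1+t)\pmod{s+t}.
\]
This is to be combined with three further facts: $|H|$ divides $|T|=(1+s)(1+st)$; $(1+s)\mid |H|(c+|X\cap\Delta|)$; and the case restrictions of Theorem~\ref{thm:C_T(u)}, which for $u\in\bar\ell$ should put us in case (b) or (c). Together these should produce a residue modulo $s+t$ that removes the spurious factor $t$ in Step 1. The target is $s+t\mid (s+1)(s-1)$, which is equivalent to $s+t\mid 1+st$ because $1+st\equiv 1-s^2\pmod{s+t}$. \textbf{This step is the main obstacle.} I would first try to show that $\gcd(s+t,t)$ divides $\gcd(s,t)$ and is killed by the second congruence.

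\emph{Step 3 (arithmetic).} Given $s+t\mid s^2-1$, we get $s+t\le s^2-1$, hence $t\le s^2-s-1\le s^2-s$.

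The case $t=s$ is impossible: it gives $2s\mid s^2-1$, hence $s\mid 1$. The case $t=s+1$ is also impossible: it gives $2s+1\mid s^2-1$, and since $4(s^2-1)=(2s+1)(2s-1)-3$, this forces $2s+1\mid 3$, i.e.\ $s=1$. Both contradict thickness.

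To exclude $t<s$, I would use the symmetric relation $s+t\mid t^2-1$ together with the partition of $\Delta\cup\{1\}$ into the $t+1$ subgroups $\bar m$, each containing an involution by (B2). I would also use Corollary~\ref{cor:2/3} on the size of $C_T(u)$, which is at least the centraliser order forced by these involutions, in order to show $t>s$. Combining these gives $s+2\le t\le s^2-s$.
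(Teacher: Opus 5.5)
There is a genuine gap in the divisibility $s+t\mid 1+st$: it is never actually established. Your Step 1 computation is correct. But because you translate by $u\in\Delta$, Lemma~\ref{lem_bensonlem} only gives $s+t\mid (s+1)(s-1)t$. Step 2 is then a list of hoped-for ingredients rather than an argument, and some of them are not available yet. For instance, nothing at this stage confines $\cS_g$ to cases (b) or (c) of Theorem~\ref{thm:C_T(u)}. In the paper, cases (d) and (e) are excluded only in Lemma~\ref{lem_FixHSgrid}, and that exclusion uses the bound $t\le s^2-s$ from the present lemma.

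The missing idea is to run your own Step 1 computation with an element \emph{off} $\Delta$. Since $|\Delta\cup\{1\}|=1+s(t+1)<|T|$, there is some $x\in T\setminus(\Delta\cup\{1\})$. Then $(x,1)$ fixes no point, and since $yx\sim y$ if and only if $x\sim 1$, also $\cP_1=\varnothing$. Lemma~\ref{lem_bensonlem} now gives $0\equiv (s+1)(t+1)=(1+st)+(s+t)\pmod{s+t}$. This is exactly the case $x^T\cap\Delta=\varnothing$ of Yoshiara's congruence, which is how the paper argues via (B1).

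With that fixed, your Step 3 is fine. From $s+t\mid s^2-1$ you get $t\le s^2-s-1$ directly, which is a clean substitute for the paper's appeal to \cite[1.2.5]{Payne} and its separate exclusion of $t=s^2$. Your exclusions of $t=s$ and $t=s+1$ are also correct.

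The second gap is the lower bound: you never rule out $t<s$. Congruences alone cannot do it; for example, $(s,t)=(7,5)$ satisfies $s+t\mid 1+st$, $s+t\mid t^2-1$ and Higman's inequality. Your remarks about involutions and Corollary~\ref{cor:2/3} do not amount to an argument, and that corollary in any case needs $\min\{s,t\}\ge 4$. The paper simply imports $\gcd(s,t)=1$ and $t\ge s+1$ from \cite[Lemma 5.3 (vii)]{Bamberg 2019}.

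A self-contained alternative uses two-sided translations, as in the proof of Lemma~\ref{lem_FixHSgrid}. Take distinct lines $m,m'$ through $1$, and put $K=\bar m$, $K'=\bar{m'}$, so $K\cap K'=\{1\}$. For $b\in K'$ and $a\in K$, the sets $Kb$ and $aK'$ are point sets of lines, namely the images of $m,m'$ under $(b,1)$ and $(1,a^{-1})$. Each family consists of $s+1$ pairwise disjoint lines. Moreover $Kb\ne aK'$, since otherwise $K=aK'a^{-1}=K'$ because $a\in K$; hence $Kb\cap aK'=\{ab\}$. So these lines form a proper subquadrangle of order $(s,1)$ on $KK'$. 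Lemma~\ref{lem_subgq1} together with $t\ge 2$ then gives $s\le t$. Your own exclusions of $t=s$ and $t=s+1$ finish the proof that $t\ge s+2$.
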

\begin{proof}
Take a nonidentity element $x\in T$. By \cite[Lemma 3]{Yoshiara}, we have
\[
 |x^T\cap\Delta|\cdot|C_T(x)|\equiv 1+st\pmod{s+t},
\]
where $x^T$ is the conjugacy class of $x$ in $T$. By the property (B1), either $x^T\cap\Delta=\varnothing$ or $x^T$. Both cases occur as $x$ vary, so
\[
  |T|\equiv 0\equiv 1+st\pmod{s+t}
\]
by the fact $|x^T|\cdot|C_T(x)|=|T|$. It follows that $s+t\mid 1+st$. By \cite[Lemma 5.3 (vii)]{Bamberg 2019}, we have $\gcd(s,t)=1$ and $t\ge s+1$. It is elementary to show that $s+t$ does not divide $1+st$ if $t=s+1$ or $t=s^2$.  By \cite[1.2.5]{Payne}, we have $t\leq s^2-s$. This completes the proof.
\end{proof}

\begin{lemma}\label{lem_FixgHS}
Take notation as above, and let $H=C_T(u)$.
\begin{itemize}
  \item[(1)]For $\ell'\in\cL_0(g)$, there is $y\in H$ and a line $m$ incident with $1$ such that $\ell'=m^{(y,1)}$.
  \item[(2)]We have $\cP_1(g)=N_T(\bar{\ell})\setminus H$, and $N_T(\bar{\ell})=H\bar{\ell}$.
\end{itemize}
\end{lemma}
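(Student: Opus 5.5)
Throughout, write $g=(u,u)$. It acts on $\cP=T$ by $x\mapsto uxu$, since $u$ is an involution. I will use that collinearity on $T$ is invariant under all left and right translations, because both $M$ and $\{(1,h):h\in T\}$ are contained in $G$. Consequently $x\sim y$ if and only if $xy^{-1}\in\Delta\cup\{1\}$.

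\emph{Part (1).} Let $\ell'\in\cL_0(g)$. The point $1$ is fixed by $g$.
\begin{itemize}
\item If $1$ is incident with $\ell'$, take $y=1$ and $m=\ell'$.
\item Otherwise, by the GQ axiom there is a unique point $y$ on $\ell'$ collinear with $1$. Since $g$ fixes both $1$ and $\ell'$, it fixes $y$, so $y\in H$.
\end{itemize}
Now put $m=\ell'^{(y^{-1},1)}$. It is incident with $y^{(y^{-1},1)}=yy^{-1}=1$, and $\ell'=m^{(y,1)}$. This settles (1).

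\emph{Part (2).} The key point is that $\ell$ is the only line through $1$ fixed by $g$, i.e. the constant $c$ of Proposition \ref{prop_FixMult} (2) equals $1$. Given that, the rest is formal.

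First, $H\bar\ell\subseteq N_T(\bar\ell)$. Indeed $\bar\ell$ is a subgroup by (B2). For $h\in H$, the element $(h,h)$ commutes with $g$, so it maps the fixed line $\ell$ through $1$ to a fixed line through $1$. Hence $h^{-1}\bar\ell h=\bar\ell$.

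Next, let $x\in\cP_1(g)$. The line $\ell'$ through $x$ and $uxu$ is mapped by $g$ to the line through $uxu$ and $x$, so $\ell'\in\cL_0(g)$. By (1), $\ell'=m^{(y,1)}$ with $y\in H$, so $\overline{\ell'}=\bar m y$. Since $\ell'$ is $g$-invariant and $uyu=y$, the element $u$ normalizes $\bar m$, so $m$ is a fixed line through $1$. By $c=1$ we get $m=\ell$. Hence $x\in\bar\ell H=H\bar\ell\subseteq N_T(\bar\ell)$, and $x\notin H$.

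Conversely, let $n\in N_T(\bar\ell)\setminus H$. Then $unu=u(nun^{-1})n\in\bar\ell n$, because $u$ and $nun^{-1}$ both lie in $\bar\ell$. So $n$ and $unu$ lie on the line $\ell^{(n,1)}$, and $n\in\cP_1(g)$. This proves $\cP_1(g)=N_T(\bar\ell)\setminus H$.

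For the remaining inclusion $N_T(\bar\ell)\subseteq H\bar\ell$, take $n\in N_T(\bar\ell)\setminus H$. The line $\ell^{(n,1)}$ is then fixed, and by the nearest-point argument of (1) it contains a fixed point $y$. Therefore $n\in\bar\ell y\subseteq\bar\ell H=H\bar\ell$.

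\emph{Main obstacle: proving $c=1$.} Suppose instead that some line $m\ne\ell$ through $1$ has $u\bar m u=\bar m$. Then $\cS_g$ contains two concurrent fixed lines, and $c\ge2$ at every point of $H$. So $\cS_g$ is of type (c), (c'), (d) or (e) in Theorem \ref{thm:C_T(u)}.

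My first attempt would combine several constraints:
\begin{itemize}
\item the formula $|\cL_0(g)|=\frac{|H|}{1+s}(c+|X\cap\Delta|)$ from Proposition \ref{prop_FixMult2};
\item the congruence of Lemma \ref{lem_bensonlem};
\item the arithmetic of Lemma \ref{lem_typeHS}, namely $s+t\mid 1+st$, $\gcd(s,t)=1$ and $s+2\le t\le s^2-s$.
\end{itemize}
Here $X\cap\Delta$ is a union of $H$-stable pieces of conjugacy classes by (B1). I expect these to force $c\in\{0,1\}$, and $c\ge1$ holds since $\ell$ is fixed.

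If the arithmetic does not close the argument, the fallback is structural. Every line $m$ through $1$ has $\bar m$ a subgroup containing an involution, and these subgroups pairwise intersect trivially. An involution $u\in\bar\ell$ normalizing a second such subgroup $\bar m$ would produce, from $z\in\bar m$, the points $1,u,uz,uzu$. These should give a forbidden configuration (a triangle) unless $uzu=z$, so that $u$ would centralize $\bar m$. Then one would need to rule out the latter using case (c) of Theorem \ref{thm:C_T(u)} together with $t\ge s+2$.
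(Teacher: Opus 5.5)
There is a genuine gap. Your part (1) is correct, and your nearest-point argument is more direct than the paper's appeal to Theorem~\ref{thm:C_T(u)}. Your proofs of $N_T(\bar\ell)\setminus H\subseteq\cP_1(g)$ and of $N_T(\bar\ell)\subseteq H\bar\ell$ are also fine, and neither uses $c=1$. Nor does $H\le N_T(\bar\ell)$ need it: $(h,h)$ fixes $1$ and $u$, hence fixes $\ell$.

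\textbf{Where the gap is.} The inclusion $\cP_1(g)\subseteq N_T(\bar\ell)$ is routed through the unproved claim that $\ell$ is the only $g$-fixed line through $1$. That claim cannot be obtained by local means.
\begin{itemize}
\item If $c=1$, then $\cS_g$ is of type (b) in Theorem~\ref{thm:C_T(u)}, and Lemma~\ref{lem_FixHSgrid} shows this case is contradictory.
\item What the local analysis actually produces is the grid of Lemma~\ref{lem_FixHSgrid2}. There $c=2$, and the second fixed line $m$ through $1$ satisfies $\bar m\le C_T(u)$. This is exactly the ``centralizing'' case your fallback leaves open.
\item Your fallback's first step can be repaired, though not as written. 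The points $1,u,uz,uzu$ you list form only a quadrilateral, which a generalized quadrangle allows. However, $z$, $uz$, $uzu$ are pairwise collinear by (B1), and this does force $uzu=z$.
\item The centralizing case is excluded only by the global count of $D$-orbits on lines through $1$ at the end of Section~\ref{Sec_proof}. That count itself uses the lemma you are proving.
\end{itemize}
So establishing $c=1$ would essentially prove Theorem~\ref{thm_HSnonext} on its own. The arithmetic you list cannot force it. For instance, take a $g$-fixed grid with parameters $(s,s)$, $(s,t)=(4,11)$, $c=2$, $|H|=25$ and $X\cap\Delta=\varnothing$. This is consistent with Proposition~\ref{prop_FixMult2}, Lemma~\ref{lem_bensonlem} and Lemma~\ref{lem_typeHS}.

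\textbf{The missing idea.} You need no information about other fixed lines; the absence of triangles suffices. Let $x\in\cP_1(g)$, so $x\sim uxu\ne x$.
\begin{itemize}
\item The element $(x^{-1},u)\in G$ sends $x\mapsto u$ and $uxu\mapsto xux^{-1}$. Hence $u\sim xux^{-1}\ne u$.
\item By (B1), $xux^{-1}\in\Delta$. So $1$, $u$, $xux^{-1}$ are pairwise collinear, and therefore lie on one line, namely $\ell$.
\item The element $(x^{-1},x^{-1})$ acts as $y\mapsto xyx^{-1}$. It fixes $1$ and maps $u$ into $\bar\ell$, so it stabilizes $\ell$, giving $x\bar\ell x^{-1}=\bar\ell$.
\end{itemize}
This is the paper's argument. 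With it, and the direct argument for $H\le N_T(\bar\ell)$, the rest of your proof goes through.
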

\begin{proof}
(1) Take a line $\ell'$ fixed by $g$. Since $g$ fixes two collinear points $1,u$, the fixed substructure $\cS_g$ is one of the cases (b)-(e) in Theorem \ref{thm:C_T(u)}. In particular, there is a fixed point $y$ on $\ell'$, i.e., $y\in H$. The line $m=\ell'^{(y^{-1},1)}$ is incident with $1$, and we can check that it is fixed by $g$. We have $\ell'=m^{(y,1)}$, and this proves the claim.

(2) Take a point $y$ in $\cP_1(g)$, so that $y\sim u^{-1}yu$. By applying $(y^{-1},u^{-1})\in G$ to both points, we see that $u$ and $yuy^{-1}$ are collinear. They are both collinear with $1$, so $yuy^{-1}\in\bar{\ell}$. We deduce that $(y,y)$ stabilizes the line $\ell$, i.e., $y\bar{\ell}y^{-1}=\bar{\ell}$, or equivalently $y\in N_T(\bar{\ell})$. We thus have $\cP_1(g)\subseteq N_T(\bar{\ell})\setminus H$. By reversing the above arguments, we deduce that $N_T(\bar{\ell})\setminus H\subseteq \cP_1(g)$, so equality holds. For $x\in H$, $(x,x)$ fixes both $1$ and $u$, so it fixes the line $\ell$. It follows that $x^{-1}\bar{\ell}x=\bar{\ell}$, i.e., $x\in N_T(\bar{\ell})$. We thus have $H\le N_T(\bar{\ell})$. It remains to show that $N_T(\bar{\ell})=H\bar{\ell}$. Take $x\in N_T(\bar{\ell})$. Then $xux^{-1}=u'$ for some $u'\in \bar{\ell}$.  The line $\ell^{(x,1)}$ is stabilized by $g$, since
\[
  \ell^{(x,1)(u,u)}=\ell^{(u'x,u)}=\ell^{(u',u)(x,1)}=\ell^{(x,1)}.
\]
Here we used the fact that $\bar{\ell}$ is a group, cf. (B1). By (1) there is $y\in H$ and a line $m$ incident with $1$ such that $\ell^{(x,1)}=m^{(y,1)}$. By comparing their point sets, we deduce that $\bar{\ell}x=\bar{m}y$. Since both $\bar{\ell}$ and $\bar{m}$ are subgroups by (B2), we have $\ell=m$. It follows that $x\in \bar{\ell}y\subseteq \bar{\ell}H$. This completes the proof.
\end{proof}

\begin{lemma}\label{lem_FixHSgrid}
The fixed substructure $\cS_g$ is not of type (a), (b), (d) or (e) in Theorem \ref{thm:C_T(u)}.
\end{lemma}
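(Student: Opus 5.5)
The plan is to treat the four cases separately. The same few ingredients recur throughout.

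\emph{The ingredients.} First, $g=(u,u)$ is an involutory multiplier fixing the two collinear points $1$ and $u$, and hence fixing the line $\ell$. Second, $u\in Z(H)$ since $H=C_T(u)$, and $|H|$ is even. Third, Lemma~\ref{lem_FixgHS} gives $|\cP_1(g)|=|H\bar{\ell}|-|H|=|H|\bigl(\tfrac{s+1}{k}-1\bigr)$ with $k=|H\cap\bar{\ell}|$. Fourth, Lemma~\ref{lem_bensonlem} combined with $s+t\mid 1+st$ from Lemma~\ref{lem_typeHS}. Since $(s+1)(t+1)\equiv 1+st\equiv 0 \pmod{s+t}$, I expect to derive the key divisibility
\[
  s+t \;\Big|\; |H|\Bigl(t+\frac{s+1}{k}\Bigr).
\]
I will supplement this with the integrality of $|\cL_0(g)|=\frac{|H|}{1+s}(c+|X\cap\Delta|)$ from Proposition~\ref{prop_FixMult2}, and with $\gcd(s,t)=1$ and $s+2\le t\le s^2-s$.

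\emph{Cases (a) and (d).} Case (a) is immediate: $1$ and $u$ are collinear fixed points, so $\cS_g$ is not a partial ovoid. For (d), let $X_1$ be the index-$2$ subgroup of pairwise noncollinear points with $1\in X_1$. Then $u\in X_2=uX_1$, and each fixed line through $1$ meets $H$ in exactly $\{1,x_2\}$ with $x_2\in X_2$. By (B2), $\bar m\cap H$ is then a subgroup, so every element of $uX_1$ is an involution. Since $u$ is central, every $x\in X_1$ satisfies $x^2=1$, so $H$ is an elementary abelian $2$-group. Moreover $X_1$ commutes with $u$, and then $x$ and $xu$ are collinear for $x\in X_1$. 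In that situation I would show that some $x\in X_1\setminus\{1\}$ is forced to be collinear with $1$, using (B1) (conjugacy invariance of $\Delta$) together with the structure $X_2=x^{-1}X_2x$. Alternatively, I would use the divisibility above with $k=2$ and $|H|=2(t_1+1)\le 2(s+1)$ to get a contradiction with $t\ge s+2$.

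\emph{Case (b).} Here $H\subseteq\bar{\ell}$, so $k=|H|$ and $N_T(\bar{\ell})=\bar{\ell}$. The divisibility becomes $s+t\mid t|H|+s+1$, i.e. $s+t\mid s(|H|-1)-1$ after reducing $t\equiv -s$. Since $|H|$ divides $s+1$ and $|H|\ge 2$, the quantity $s(|H|-1)-1$ is positive and less than $s(s+1)$. I would combine this with $\gcd(s,s+t)=1$ and the $\cL_0$-integrality condition (here $c=1$, so $(1+s)\mid |H|(1+|X\cap\Delta|)$) to pin down $t$. Then I would contradict $s+t\mid 1+st$, possibly using that $\bar{\ell}$ is a self-normalizing subgroup containing the full centralizer of an involution.

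\emph{Case (e), the main obstacle.} Here $\cS_g$ is a subquadrangle of order $(s',t')$, $H$ acts regularly on its points, and $(1+s')(1+s't')=|H|$. Lemma~\ref{lem_subgq1} gives $s'=s$ or $s't'\le s$, and $t'=t$ or $s't'\le t$. In the subcase $s'=s$ we have $\bar{\ell}\subseteq H$, so $k=s+1$. The divisibility then gives $s+t\mid |H|(t+1)$. I would then use the involution centralizer structure: $H$ is itself a Singer group of $\cS_g$ with $u$ central, so Lemma~\ref{lem_typeHS}-type arguments (Yoshiara's congruence inside $\cS_g$) apply to give $s'+t'\mid$ something. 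In the subcase $s'<s$ the ratio $(s+1)/k$ is controlled by $k=s'+1$. I expect this case to need the most delicate arithmetic, balancing the two Lemma~\ref{lem_subgq1} bounds against $s+t\mid|H|(t+(s+1)/k)$ and $\gcd(s,t)=1$.
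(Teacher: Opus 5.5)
\textbf{There is a genuine gap in cases (b), (d) and (e).} Your ingredients are correct. The congruence $s+t\mid |H|\bigl(t+\frac{s+1}{k}\bigr)$ does follow from Lemma~\ref{lem_bensonlem}, Lemma~\ref{lem_FixgHS}(2) and $s+t\mid 1+st$. Case (a) is fine, and your observation in (d) that $H$ is elementary abelian is correct. However, (b), (d) and (e) are not actually proved, and divisibility modulo $s+t$ plus integrality of $|\cL_0(g)|$ is too weak to finish them.

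\emph{Case (b).} The values $(s,t,|H|)=(5,7,6)$ satisfy every constraint you list: $s+t\mid 1+st$, $\gcd(s,t)=1$, $s+2\le t\le s^2-s$, $|H|$ even with $|H|\mid s+1$, and $s+t\mid s(|H|-1)-1$. Moreover, with $c=1$ one gets $|\cL_0(g)|=1$, so the integrality condition says nothing.

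\emph{Case (d).} The values $(s,t,t_1)=(7,9,7)$, with $H$ elementary abelian of order $16$, give $16\mid 16\cdot 13$, and $\frac{|H|}{1+s}(c+|X\cap\Delta|)=22$ is an integer. Your other route, forcing some $x\in X_1\setminus\{1\}$ to be collinear with $1$, has no mechanism behind it. Since $H$ is abelian, translating or conjugating $x\sim xu$ by elements of $H$ only returns $1\sim u$.

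\emph{Case (e).} You give no argument, only the expectation of ``delicate arithmetic''.

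\textbf{The missing idea for (d) and (e)} is to use Proposition~\ref{prop_FixMult2}(2) as an exact equation, because the fixed substructure determines $|\cL_0(g)|$ exactly. Suppose $\cS_g$ has order $(s',t')$ with $t'\ge 2$; this covers (d) with $s'=1$. Then $c=t'+1$, $|H\cap\bar\ell|=s'+1$, $|H|=(1+s')(1+s't')$ and $|\cL_0(g)|=(1+t')(1+s't')$. Lemma~\ref{lem_FixgHS}(2) with Proposition~\ref{prop_FixMult}(1) gives $|X\cap\Delta|=\frac{s-s'}{s'+1}$. Substituting yields $t'(s-s')=0$, hence $s'=s$.
\begin{itemize}
\item[(d)] This is an immediate contradiction, since $s'=1<s$.
\item[(e)] Any two distinct fixed lines $\ell,\ell'$ through $1$ now satisfy $\bar\ell,\bar{\ell'}\subseteq H$. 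The lines $\ell^{(x,1)}$ with $x\in\bar{\ell'}$ and $\ell'^{(1,y)}$ with $y\in\bar\ell$ form a grid of order $(s,1)$. This grid lies properly inside $\cS_g$, which is itself proper in $\cS$. For such a chain, \cite[2.2.2(vi)]{Payne} forces $t=s^2$, contradicting $t\le s^2-s$ from Lemma~\ref{lem_typeHS}.
\end{itemize}

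\textbf{For (b), the missing step is an orbit count, not arithmetic.} You note that $\bar\ell$ is self-normalizing, and that is exactly the point. The stabilizer of $\ell$ in $D$ is $\{(x,x):x\in N_T(\bar\ell)\}$, which has order $s+1$. So the $D$-orbit of $\ell$ consists of $|T|/(s+1)=1+st$ distinct lines through $1$, which exceeds $t+1$.
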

\begin{proof}
We write $X=\{g^ug^{-1}:g\in G\}$, and let $H=C_T(u)$. Since $g$ fixes two collinear points $1$ and $u$,  $\cS_g$ is not a partial ovoid, i.e., not of type (a).

Suppose that $\cS_g$ is of type (b), so that $\ell$ is the unique fixed line by $g$. We have $H\le \bar{\ell}$, and so $N_T(\bar{\ell})=\bar{\ell}$ by Lemma \ref{lem_FixgHS} (2). The stabilizer of $\ell$ in $D$ is $\{(x,x):x\in N_T(\ell)\}$ which has size $|\bar{\ell}|=s+1$, so its $D$-orbit has size $\frac{|T|}{s+1}=1+st$. On the other hand, each line in this $D$-orbit is incident with $1$, so $t+1\ge 1+st$: a contradiction.

Suppose that $\cS_g$ is a subquadrangle of order $(s',t')$ with $s'\ge 1$ and $t'\ge 2$. There are $c=t'+1$ fixed lines incident with a point of $H$,
and a fixed line contains $s'+1$ points of $H$. Moreover, we have
$|\cL_0(\theta)|=(1+t')(1+s't')$ and $|H|=(1+s')(1+s't')$. It follows from Lemma \ref{lem_FixgHS} (2) that
\[
  |N_T(\bar{\ell})|=\frac{|\bar{\ell}|\cdot|H|}{|\bar{\ell}\cap H|}=(1+s)(1+s't').
\]
By Lemma \ref{lem_FixgHS}, we have
\[
  |\cP_1(g)|=|N_T(\bar{\ell})|-|H|=(s-s')(1+s't').
\]
By Proposition \ref{prop_FixMult} (1), we have $|X\cap\Delta|=\frac{s-s'}{s'+1}$.
By Proposition \ref{prop_FixMult2} (2), we have
\[
   |\cL_0(g)|=(1+t')(1+s't')=\frac{(1+s')(1+s't')}{1+s}\left(t'+1+\frac{s-s'}{s'+1}\right).
\]
It yields $(s-s')t=0$ after simplification, so $s=s'$. Take a line $\ell'$ distinct from $\ell$ in $\cS_g$. Then $\bar{\ell}$ and $\bar{\ell'}$ are both  subgroups of $H$, and $\bar{\ell}\cap\bar{\ell'}=\{1\}$. The line sets $\{\ell^{(x,1)}:x\in\bar{\ell'}\}$ and $\{\ell'^{(1,y)}:y\in\bar{\ell}\}$ consist of pairwise disjoint lines, and  $\ell^{(x,1)}$, $\ell'^{(1,y)}$ are concurrent at the point $y^{-1}x$ for $x\in \bar{\ell'}$ and $y\in\bar{\ell}$. They form a subquadrangle of order $(s,1)$ fully contained in $\cS_g$. By \cite[2.2.2 (vi)]{Payne}, we deduce that $t=s^2$ and $t'=s$. This contradicts the bound $t\le s^2-s$ in Lemma \ref{lem_typeHS}. Hence $\cS_g$ is not of type (d) or (e).  This completes the proof.
\end{proof}

\begin{lemma}\label{lem_FixHSgrid2}
The substructure $\cS_g$ is  a grid with parameter $(s_1,s)$, where $s_1=|C_T(u)\cap \bar{\ell}|-1$. In particular, we have $|C_T(u)|=(s+1)(|C_T(u)\cap \bar{\ell}|+1)$.
\end{lemma}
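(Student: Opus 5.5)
The plan is to combine the counting identities of Propositions \ref{prop_FixMult} and \ref{prop_FixMult2} with the description of $\cP_1(g)$ in Lemma \ref{lem_FixgHS} (2), exactly as in the proof of Lemma \ref{lem_FixHSgrid}. The resulting equation should force the second fixed line through $1$ to be completely fixed.

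First, by Lemma \ref{lem_FixHSgrid} and Theorem \ref{thm:C_T(u)}, since $g$ fixes the two collinear points $1$ and $u$ we must be in case (c) or (c'). So $\cS_g$ is a grid, possibly with equal parameters. Since $g$ fixes $1$ and $u$, the line $\ell$ is fixed, so $\ell$ is one of the two fixed lines through $1$; call the other one $m$. Put $x=|H\cap\bar{\ell}|$ and $y=|H\cap\bar{m}|$, where $H=C_T(u)$. The grid structure gives $|H|=xy$ and $|\cL_0(g)|=x+y$. Moreover, each fixed point lies on exactly $c=2$ fixed lines.

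Next I compute $|\cP_1(g)|$. By Lemma \ref{lem_FixgHS} (2) we have $N_T(\bar{\ell})=H\bar{\ell}$. Since $\bar{\ell}$ is a subgroup by (B2), this gives $|N_T(\bar{\ell})|=(s+1)|H|/x$, and hence $|\cP_1(g)|=|H|\bigl(\tfrac{s+1}{x}-1\bigr)$. Proposition \ref{prop_FixMult} (1) then yields $|X\cap\Delta|=\frac{s+1-x}{x}$.

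Since $o(g)=2$, Proposition \ref{prop_FixMult2} (2) applies. It gives
\[
x+y=|\cL_0(g)|=\frac{xy}{1+s}\Bigl(2+\frac{s+1-x}{x}\Bigr)=\frac{y(s+1+x)}{s+1}.
\]
Clearing denominators, this becomes $(x+y)(s+1)=y(s+1)+xy$, i.e.\ $x(s+1)=xy$, so $y=s+1$. Hence $\bar{m}\subseteq H$, and the grid has parameters $(s_1,s)$ with $s_1=x-1=|C_T(u)\cap\bar{\ell}|-1$. It follows that $|C_T(u)|=xy=(s+1)|C_T(u)\cap\bar{\ell}|$; this product is what the claimed size formula should read.

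The only delicate point is the bookkeeping in case (c'). There one must check that the counts $|H|=xy$ and $|\cL_0(g)|=x+y$, and the use of $N_T(\bar{\ell})=H\bar{\ell}$, do not depend on whether case (A) or (B) of the proof of Theorem \ref{thm:C_T(u)} holds. They do not: those counts depend only on the grid structure of $\cS_g$ and on Lemma \ref{lem_FixgHS}.
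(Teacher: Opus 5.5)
Your proof is correct and follows essentially the same route as the paper: Lemma~\ref{lem_FixgHS}~(2) gives $|N_T(\bar{\ell})|$ and hence $|\cP_1(g)|$, Proposition~\ref{prop_FixMult}~(1) then gives $|X\cap\Delta|$, and the line count of Proposition~\ref{prop_FixMult2}~(2) forces $|C_T(u)\cap\bar{m}|=s+1$. You are also right that the ``in particular'' formula should read $|C_T(u)|=(s+1)|C_T(u)\cap\bar{\ell}|$, which is the form used later in \eqref{eqn_NTli}; and your simplification $x(s+1)=xy$ is accurate, whereas the factorization stated in the paper should be $(s-s_2)(s_1+1)=0$ (this leads to the same conclusion).
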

\begin{proof}
Let $X=\{g^ug^{-1}:g\in G\}$, and let $H=C_T(u)$. By Lemma \ref{lem_FixHSgrid}, $\cS_g$ is a grid with parameter $(s_1,s_2)$. In particular, there is exactly one more fixed line $m$ that is incident with $1$. We assume without loss of generality that $s_1+1=|\bar{\ell}\cap H|$, $s_2+1=|\bar{m}\cap H|$. There are $c=2$ fixed lines incident with a fixed point, and $|H|=(s_1+1)(s_2+1)$, $|\cL_0(g)|=s_1+s_2+2$. By Lemma \ref{lem_FixgHS} (2), we deduce that
\[
 |N_T(\bar{\ell})|=\frac{|H|\cdot|\bar{\ell}|}{H \cap\bar{\ell}}=(s+1)(s_2+1),
\]
and similarly $|N_T(\bar{m})|=(s+1)(s_1+1)$. By the same lemma, we have
\[
  |\cP_1(g)|=|N_T(\bar{\ell})|-|H|=(s-s_1)(s_2+1).
\]
By Proposition \ref{prop_FixMult} (1), we have $|X\cap\Delta|=\frac{|\cP_1(g)|}{|H|}=\frac{s-s_1}{s_1+1}$. By Proposition 3.2 (2), we have $(|X\cap\Delta|+2)|H|=(s+1)|\cL_0(g)|$. After simplification, we obtain $(s-s_2)(s_1+s_2+2)=0$, so $s=s_2$ as desired. This completes the proof.
\end{proof}

We are now ready to complete the proof of Theorem \ref{thm_HSnonext}. The group $D=\{(x,x):x\in T\}$ permutes the set of lines incident with $1$. We write $\ell_1,\ldots,\ell_b$ for a complete set of $D$-orbit representatives for this action. By (B2), $\overline{\ell_i}$ is a subgroup of order $s+1$ and it contains an involution $u_i$. By Lemma \ref{lem_FixgHS} (2), we have $N_T(\overline{\ell_i})=\overline{\ell_i} C_T(u_i)$. By Lemma \ref{lem_FixHSgrid2}, we have $|C_T(u_i)|=(s+1)|C_T(u_i)\cap \overline{\ell_i}|$. It follows that
\begin{equation}\label{eqn_NTli}
  |N_T(\overline{\ell_i})|=\frac{(s+1)|C_T(u_i)|}{|C_T(u_i)\cap \overline{\ell_i}|}=(s+1)^2.
\end{equation}
The stabilizer of a line $\ell$ in $D$ is $\{(x,x):x\in N_T(\bar{\ell})\}$, so $t+1=\sum_{i=1}^{b}\frac{|T|}{|\N_{T}(\overline{\ell_i})|}$.  By using \eqref{eqn_NTli} and the fact $|T|=(1+s)(1+st)$, we deduce that $t+1=\frac{1+st}{s+1}b$. It follows that $s+t=(b-1)(1+st)$. On the other hand, $s+t$ divides $1+st$ by Lemma \ref{lem_typeHS}. We deduce that $b=2$, and $s+t=1+st$, i.e., $(s-1)(t-1)=0$: a contradiction. This completes the proof of Theorem \ref{thm_HSnonext}.

\section{Point-primitive automorphism groups of type SD or CD}
In this section, we suppose that $\mathcal{S}=(\mathcal{P},\mathcal{L})$ is a finite  thick generalized quadrangle of order $(s,t)$ admitting a point-primitive automorphism group $G$  of type SD or CD. There is a finite nonabelian simple group $T$ such that $\textup{soc}(G)=(T^k)^r$, where $k\ge 2$ and $r\ge 1$. Let
\[
 D=\{(t,\cdots,t):t\in T\}\le T^k,\quad  M:=(T^{k-1}\times\{1\})^r,
\]
so that $D^r$ normalizes $M$ and $\textup{soc}(G)=D^r M$. There is a point $p\in\cP$ whose stabilizer in $\textup{soc}(G)$ is $D^r$, and the group $M$ acts regularly on the point set $\cP$.   We aim to show that the conditions in Table~\ref{tab:1} hold and thus complete the proof of Theorem \ref{thm_PtPrim}. 

\begin{lemma}\label{lem_stsmall}
We have $\min\{s,t\}\ge 4$.
\end{lemma}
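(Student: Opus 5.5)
Suppose instead that $\min\{s,t\}\le 3$. By Remark~\ref{rem_smallpara}, up to duality $\cS$ is one of $W(2)$, $W(3)$, $\mathcal{Q}(4,3)$, $\mathcal{Q}^-(5,2)$, $\mathcal{Q}^-(5,3)$, or the Payne derived quadrangle of $W(4)$, or the dual of one of these. The group $M$ acts regularly on $\cP$, so $\cS$ is a Singer quadrangle. The remark says the only Singer quadrangles in this list are $\mathcal{Q}^-(5,2)$ and the Payne derived quadrangle of $W(4)$. So $\cS$ is one of these two, and $|\cP|=|M|=|T|^{(k-1)r}$ with $k\ge 2$, $r\ge 1$.

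Next I compare point counts. The quadrangle $\mathcal{Q}^-(5,2)$ has order $(2,4)$, so $|\cP|=3\cdot 9=27$. The Payne derived quadrangle of $W(4)$ has order $(3,5)$, so $|\cP|=4\cdot 16=64$. In both cases $|\cP|$ is a prime power. But $|T|^{(k-1)r}$ is divisible by at least three distinct primes, since $T$ is nonabelian simple (Burnside's $p^aq^b$ theorem). This is a contradiction, so $\min\{s,t\}\ge 4$.

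The point to handle with care is applying Remark~\ref{rem_smallpara} to $\cS$ rather than to its dual. The hypothesis is that $G$ is point-primitive with a point-regular subgroup $M$. Hence $\cS$ itself, not its dual, carries the Singer group, and the dual cases in the list are covered because the remark already lists Singer quadrangles among both the quadrangles and their duals. Even without that, every quadrangle in the list with $\min\{s,t\}\le 3$ has a prime-power point count or a point count too small to be a positive power of $|T|\ge 60$. For example, $W(2)$ has $15$ points, $W(3)$ has $40$, $\mathcal{Q}(4,3)$ has $40$, $\mathcal{Q}^-(5,3)$ has $112$, and its dual has $280$. None of these numbers is a positive power of the order of a nonabelian simple group: powers of $60$ are $60,3600,\dots$, and $168>112$ while $168\ne 280$. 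So a direct size check also gives the contradiction, without relying on the Singer classification in the remark.
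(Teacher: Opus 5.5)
Your argument is correct and is essentially the paper's: both use Remark~\ref{rem_smallpara} to reduce to $\mathcal{Q}^-(5,2)$ and the Payne derived quadrangle of $W(4)$, and then contradict the nonsolvability of $M$. The paper instead rules out $\min\{s,t\}=2$ via $|\cP|\ge 60$ and the $64$-point case via solvability of a group of order $2^6$, while you handle both at once with prime-power point counts and Burnside. In your optional direct size check you omit the duals of $\mathcal{Q}^-(5,2)$ and of the Payne derived quadrangle ($45$ and $96$ points), though both are just as easily excluded.
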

\begin{proof}
We have $|\cP|=|M|=|T|^{(k-1)r}\ge 60$, so $\min\{s,t\}\ge 3$. If $\min\{s,t\}=3$, then $\cS$ is Payne derived quadrangle of $W(4)$, cf. Remark \ref{rem_smallpara}. Its Singer group has order $2^6$, and so is solvable. This contradicts the fact that $M$ is nonsolvable, so $\min\{s,t\}\ge 4$.
\end{proof}

\begin{lemma}\label{lem:SD_2/3}
For any $x\in T\setminus\{1\}$, we have $|\C_T(x)|<|T|^{1-r/4}$. Moreover, we have $r\le 3$.
\end{lemma}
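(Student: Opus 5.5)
We will apply Corollary \ref{cor:2/3} to a suitable multiplier of the Singer group $M$. By Lemma \ref{lem_stsmall}, $\min\{s,t\}\ge 4$, so the corollary applies to $\cS$ with Singer group $M$.

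\textbf{Step 1: a multiplier.} The stabilizer $D^r$ of $p$ normalizes $M$. Identify $\cP$ with $M$ via $p^m\mapsto m$. Then for $\delta\in D^r$ we have $p^{m\delta}=p^{\delta^{-1}m\delta}$, so $\delta$ acts on $\cP=M$ as the conjugation automorphism $m\mapsto \delta^{-1}m\delta$. Since $\delta$ is a collineation, this automorphism of $M$ is a multiplier.

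\textbf{Step 2: choice of multiplier and its centralizer.} Fix $x\in T\setminus\{1\}$ and take
\[
\delta=\bigl((x,\dots,x),\dots,(x,\dots,x)\bigr)\in D^r,
\]
that is, $x$ in every diagonal factor. Since $M=(T^{k-1}\times\{1\})^r$, the centralizer of $\delta$ in $M$ is $(C_T(x)^{k-1}\times\{1\})^r$, which has order $|C_T(x)|^{(k-1)r}$. Corollary \ref{cor:2/3} then gives
\[
|C_T(x)|^{(k-1)r}<|M|^{3/4}=|T|^{3(k-1)r/4}.
\]
This only yields $|C_T(x)|<|T|^{3/4}$, independent of $r$, so this choice of $\delta$ is too weak.

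To get the exponent $1-r/4$, use a multiplier with fewer nontrivial coordinates: take $x$ in one diagonal factor only, $\delta=((x,\dots,x),1,\dots,1)$. Its centralizer in $M$ is $C_T(x)^{k-1}\times T^{(k-1)(r-1)}$. Corollary \ref{cor:2/3} gives
\[
|C_T(x)|^{k-1}\,|T|^{(k-1)(r-1)}<|T|^{3(k-1)r/4},
\]
hence $|C_T(x)|<|T|^{3r/4-(r-1)}=|T|^{1-r/4}$, as claimed.

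\textbf{Step 3: the bound on $r$.} Since $|C_T(x)|\ge 2$ for $x\ne 1$ (it contains $x$), we need $|T|^{1-r/4}>2$, which forces $r<4$, so $r\le 3$. In fact, an element $x$ of order $2$ has $|C_T(x)|>|T|^{1/3}$ by the Brauer--Fowler type bound; with $r=3$ we would need $|C_T(x)|<|T|^{1/4}$, which is impossible. Nonetheless the crude bound $r<4$ already suffices for the statement.

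\textbf{Main obstacle.} The only delicate point is confirming that conjugation by $\delta$ is genuinely a multiplier under the paper's identification convention, i.e.\ that the map $m\mapsto m^\delta$ on $M$ agrees with the action of $\delta$ on points. Once that is checked, the rest is the centralizer computation and exponent arithmetic above.
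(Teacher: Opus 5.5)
Your proof is correct and follows the paper's own argument: you apply Corollary~\ref{cor:2/3} to the multiplier $\bigl((x,\dots,x),1,\dots,1\bigr)\in D^r$, whose centralizer in $M$ is $C_T(x)^{k-1}\times T^{(k-1)(r-1)}$, and then use $|C_T(x)|>1$ to get $r\le 3$. Your Brauer--Fowler remark in Step 3 is not needed, and it is slightly misstated: in its usual form the theorem guarantees some nonidentity element, not necessarily an involution, with centralizer larger than $|T|^{1/3}$. In the paper, the case $r=3$ is instead ruled out later by the case-by-case centralizer lemmas.
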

\begin{proof}
Let $a=(x,\ldots,x)\in D$ and $\theta=(a,1,\ldots,1)\in D^r$. Then $\theta$ is a multiplier of $\cS$, and we have $C_G(\theta)=\C_T(x)^{k-1}\times (T^{k-1})^{r-1}$. On the other hand, we have $|C_G(\theta)|<|T|^{3(k-1)r/4}$ by Lemma \ref{lem_stsmall} and Corollary \ref{cor:2/3}. The claim $|\C_T(x)|<|T|^{1-r/4}$ follows after simplification. Since $|\C_T(x)|>1$, we have $1-r/4>0$, i.e., $r\le 3$. This completes the proof.
\end{proof}

In the next few lemmas we adopt the results on centralizer orders in nonabelian finite simple groups in \cite{Bamberg 2019} for our purpose.

\begin{lemma}\label{lem:spora}
If $T$ is a sporadic simple group or the Tits group ${}^2F_4(2)'$, then  there is some $x\in T\setminus\{1\}$ such that $|\C_T(x)|>|T|^{1/4}$. Moreover, if $T$ is not one of $\rm{M}_{11}$, $\rm{M}_{12}$, $\rm{M}_{22}$, $\rm{M}_{23}$, Th, ${\rm J}_1$, O'N, $\rm{J}_3$, Ru or Ly, then there is $x\in T\setminus\{1\}$ such that
$|\C_T(x)|>|T|^{1/2}$.
\end{lemma}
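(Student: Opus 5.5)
This is a finite verification over the 26 sporadic groups and the Tits group. For each group I will exhibit one explicit nonidentity element with a large centralizer. The natural choice is an involution, whose centralizer is typically the largest among nonidentity elements, and the data can be read off the ATLAS of Finite Groups. Since $\log|C_T(x)|/\log|T|$ must exceed $1/4$ (resp. $1/2$), I will compare logarithms of orders. I will also cite the tables in \cite{Bamberg 2019}, which list for each sporadic group the largest centralizer order of a nonidentity element; those tables already contain the required comparison data.

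For the first claim I will take $x$ to be a $2$-central involution. By the Brauer--Fowler bound, $|T|<(|C_T(x)|^2)!$ for any involution $x$, which already gives a crude lower bound on $|C_T(x)|$. The bound needed here is polynomial, however, so Brauer--Fowler is too weak and I will instead check group by group. Some sample values:
\begin{itemize}
\item $\mathrm{J}_1$: $|T|=175560$ and $|C_T(x)|=120$. Since $120^4>175560$, we get $120>|T|^{1/4}$.
\item $\mathrm{M}_{11}$: $|C_T(x)|=48$ and $48^4\approx 5.3\cdot 10^6>7920$.
\item Ly: $|C_T(x)|=2\cdot|A_{11}|=39916800$ and $|T|\approx 5.2\cdot 10^{16}$, so $|T|^{1/4}\approx 1.5\cdot 10^4$.
\end{itemize}
The remaining groups in the exceptional list (O'N, Th, Ru, $\mathrm{J}_3$, $\mathrm{M}_{12}$, $\mathrm{M}_{22}$, $\mathrm{M}_{23}$) are handled the same way. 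Their involution centralizers exceed the fourth root of $|T|$ by a wide margin.

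For the second claim I will go through the remaining sixteen sporadic groups and ${}^2F_4(2)'$, and for each record an element whose centralizer order exceeds $|T|^{1/2}$. The involution centralizers are large here. For example:
\begin{itemize}
\item $\mathrm{M}_{24}$: $|C(2A)|=21504$ and $|T|^{1/2}\approx 15692$.
\item $\mathrm{Co}_1$: $|C(2A)|=2^{21}\cdot 3^5\cdot 5^2\cdot 7$, compared with $|T|^{1/2}\approx 2.1\cdot 10^9$.
\item $\mathbb{M}$: $|C(2A)|=2|\mathbb{B}|\approx 8.3\cdot 10^{33}$ and $|T|^{1/2}\approx 9\cdot 10^{26}$.
\item HS: $|C(2A)|=7680$ and $|T|^{1/2}\approx 6724$.
\item ${}^2F_4(2)'$: $|C(2A)|=10240$ and $|T|^{1/2}\approx 4250$.
\end{itemize}
Each case is a single inequality between two known integers, and I will present the results in a table.

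The main obstacle is the borderline cases, where the ratio of logarithms is close to $1/2$. Among the candidates are $\mathrm{J}_2$, $\mathrm{J}_4$, HN, McL, He, Suz and $\mathrm{Fi}_{22}$; for these the $2A$ centralizer may not suffice, so I will check every class with a large centralizer (including $3A$ or $2B$) and pick the maximum. Two data points:
\begin{itemize}
\item McL: $|C(2A)|=40320$ and $|T|^{1/2}\approx 29888$, so $2A$ works.
\item $\mathrm{J}_4$: $|C(2A)|=2^{21}\cdot 3^3\cdot 5\cdot 7\cdot 11\approx 2.2\cdot 10^{10}$ and $|T|^{1/2}\approx 9.3\cdot 10^9$, so $2A$ works.
\end{itemize}
I expect the ten excluded groups to be exactly those for which no class beats $|T|^{1/2}$, consistent with \cite{Bamberg 2019}. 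This has to be checked carefully against the ATLAS.
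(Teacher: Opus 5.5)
Your proposal is correct and follows the same route as the paper, whose proof is simply a check of maximal centralizer orders in the ATLAS. Your sample values, e.g.\ $|C(2A)|=40320$ for McL and $7680$ for HS, are right; a couple of your square roots are slightly off (actually $|\mathrm{McL}|^{1/2}\approx 29969$ and $|\mathrm{HS}|^{1/2}\approx 6660$), but no comparison is affected.

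One side remark: your closing expectation, that the ten listed groups are exactly those where no class beats $|T|^{1/2}$, fails for Ly. There the centralizer $3.\mathrm{McL}$ of a $3A$-element has order $2694384000>|\mathrm{Ly}|^{1/2}\approx 2.3\cdot 10^{8}$. This does not matter for the proof, because the statement only asserts one direction.
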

\begin{proof}
The results follow by checking maximal centralizer orders in the ATLAS \cite{ATLAS}.
\end{proof}

\begin{lemma}\label{lem:alt}
If $T\cong {\rm Alt}_n$ with $n\ge 5$, then
\begin{itemize}
\item[(1)] there is some $x\in T\setminus\{1\}$ such that $|\C_T(x)|>|T|^{3/4}$ if $n\ge 16$;
\item[(2)] there is some $x\in T\setminus\{1\}$ such that $|\C_T(x)|>|T|^{1/2}$ if $n\ge 8$;
\item[(3)] there is some $x\in T\setminus\{1\}$ such that $|\C_T(x)|>|T|^{1/4}$.
\end{itemize}
\end{lemma}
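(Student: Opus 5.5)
The plan is to take $x$ to be a $3$-cycle (or, for larger $n$, a double transposition or a transposition-type element), since these have the largest centralizers in ${\rm Alt}_n$. For a $3$-cycle $x=(1\,2\,3)$ with $n\ge 5$, we have $\C_{{\rm Sym}_n}(x)=\langle x\rangle\times{\rm Sym}_{n-3}$. Intersecting with ${\rm Alt}_n$ gives
\[
|\C_T(x)|=3\cdot\frac{(n-3)!}{2}\cdot 2/2=\frac{3(n-3)!}{1}\cdot\frac12\cdot 2 ,
\]
that is, $|\C_T(x)|=3\,(n-3)!$. To see this, note that an odd permutation of $\{4,\dots,n\}$ times a power of $x$ is odd. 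Hence the centralizer in ${\rm Alt}_n$ is $\langle x\rangle\times{\rm Alt}_{n-3}$, of order $3(n-3)!/2$. I will use the safe value $|\C_T(x)|=3(n-3)!/2$. Since $|T|=n!/2$, each claim reduces to a factorial inequality of the form
\[
\Bigl(\tfrac{3(n-3)!}{2}\Bigr)^{m}>\Bigl(\tfrac{n!}{2}\Bigr)^{j}
\]
with $(m,j)=(4,3),(2,1),(4,1)$ respectively.

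For (3) the inequality $(3(n-3)!/2)^4>n!/2$ is immediate. At $n=5$ we have $|\C_T(x)|=3$ and $3^4=81>60$, and the left side grows much faster than the right. For (2) I check $(3(n-3)!/2)^2>n!/2$, i.e.\ $9(n-3)!>2n(n-1)(n-2)$. At $n=8$ this reads $9\cdot120=1080>672$. Passing from $n$ to $n+1$ multiplies the left side by $n-2$ and the right side by $(n+1)/(n-2)<2$, so the inequality holds for all $n\ge 8$ by induction.

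For (3/4) a $3$-cycle is still fine asymptotically, since $(n-3)!^4$ versus $n!^3$ behaves like $(n-3)!/(n^9)$ roughly. I need $\bigl(3(n-3)!/2\bigr)^4>(n!/2)^3$, i.e.
\[
\tfrac{81}{8}\,(n-3)!>\bigl(n(n-1)(n-2)\bigr)^3 .
\]
At $n=16$ the left side is $81\cdot 13!/8\approx 6.3\times10^{10}$ and the right side is $3360^3\approx 3.8\times10^{10}$, so it holds. The induction step works as before: the left side gains a factor $n-2\ge 14$, while the right side gains the factor $\bigl((n+1)/(n-2)\bigr)^3<2$.

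The main obstacle is only the base case $n=16$ in (1), which is tight. I would verify it numerically with care, and if the margin felt uncomfortable I would fall back to the transposition-pair element $(1\,2)(3\,4)$, whose centralizer has order $8\cdot(n-4)!/2\cdot\tfrac12\cdot2=4(n-4)!$, and compare.
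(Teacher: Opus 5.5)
Your proposal is correct and follows the paper's proof: take $x$ to be a $3$-cycle, so $|\C_T(x)|=\frac{3}{2}(n-3)!$, and check the three factorial inequalities. You actually give more detail than the paper, namely base cases plus an induction step.

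There are two harmless slips. First, your opening display gives $3(n-3)!$ before you correct it to $3(n-3)!/2$. Second, in (1) the reduction actually yields $\frac{81}{2}(n-3)!>\bigl(n(n-1)(n-2)\bigr)^3$, not $\frac{81}{8}(n-3)!>\bigl(n(n-1)(n-2)\bigr)^3$. Your version is therefore stronger than needed, and the $n=16$ base case has more room than you thought.
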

\begin{proof}
Take $x\in T$ to be a $3$-cycle, so that $|\C_T(x)|=\frac{3}{2}(n-3)!$. It holds that $|\C_T(x)|>|T|^{3/4}$ if $n\ge 16$, $|\C_T(x)|>|T|^{1/2}$ if $n\ge 8$ and $|\C_T(x)|>|T|^{1/4}$ if $n\ge 5$. This completes the proof.
\end{proof}

\begin{lemma}\label{lem:clssim}
Suppose that $T$ is a finite simple classical group.
\begin{itemize}
\item [(1)] There is some $x\in T\setminus\{1\}$ such that $|\C_T(x)|\ge|T|^{3/4}$ if $T$ is one of the following groups: $\PSL(n,q)$ with $n\ge 8$, ${\rm PSU}(n,q)$ with $n\ge 7$, ${\rm PSp}(2n,q)$ with $n\ge 4$, $\Omega(2n+1,q)$ with $n\ge 4$, or ${\rm P\Omega}(2n,q)$ with $n\ge 9$.
\item [(2)] There is some $x\in T\setminus\{1\}$ such that $|\C_T(x)|\ge|T|^{1/2}$ if $T$ is one of the following groups: $T=\PSL(n,q)$ with $n\ge 4$, ${\rm PSU}(n,q)$ with $n\ge 4$, ${\rm PSp}(2n,q)$ with $n\ge 2$, $\Omega(2n+1,q)$ with $n\ge 3$, or ${\rm P\Omega}^\epsilon(2n,q)$ with $n\ge 4$.
\item [(3)] There is some $x\in T\setminus\{1\}$ such that $|\C_T(x)|\ge|T|^{1/4}$.
\end{itemize}
\end{lemma}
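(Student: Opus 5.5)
We prove the lemma by exhibiting, in each family, one explicit element with a large centralizer, usually a long root element (a transvection or its analogue). We then compare the order of its centralizer with the standard order formulas for the classical groups. The paper says the centralizer results of \cite{Bamberg 2019} are adopted, so wherever \cite{Bamberg 2019} already gives an element with $|\C_T(x)|\ge|T|^{\alpha}$ for $\alpha\in\{3/4,1/2\}$ in the relevant range, we quote it. Otherwise we verify the inequality directly.

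\emph{Linear and unitary groups.} For $T=\PSL(n,q)$, take $x$ to be the image of a transvection. Its centralizer in $\mathrm{SL}(n,q)$ has the shape $q^{2n-3}\!:\!\mathrm{GL}(n-2,q)$ (the stabilizer of a hyperplane and a point on it, acting trivially on the quotient), up to bounded factors coming from $\gcd(n,q-1)$. Thus $|\C_T(x)|$ is roughly $q^{2n-3}\cdot q^{(n-2)^2}=q^{n^2-2n+1}$, while $|T|$ is roughly $q^{n^2-1}$. The exponent condition $n^2-2n+1\ge \alpha(n^2-1)$ becomes $(n-1)\ge\alpha(n+1)$. For $\alpha=3/4$ this gives $n\ge7$, and for $\alpha=1/2$ it gives $n\ge3$. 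The stated thresholds ($n\ge8$ and $n\ge4$) leave enough slack to absorb the constants $\prod(1-q^{-i})$ and the gcd factors. We control these constants with the standard bounds $\tfrac14 q^{d}<|X|<q^{d}$ for the order of a group $X$ of dimension $d$. The unitary case is identical: take a unitary transvection, whose centralizer has order about $q^{2n-3}|\mathrm{GU}(n-2,q)|$.

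\emph{Symplectic and orthogonal groups.} For $\mathrm{PSp}(2n,q)$, a symplectic transvection has centralizer $q^{1+2(n-1)}\!:\!\mathrm{Sp}(2n-2,q)$ up to a factor of $2$. This gives exponent $2n-1+(n-1)(2n-1)=2n^2-n$ against $2n^2+n$. The inequality $2n^2-n\ge\alpha(2n^2+n)$ holds for $\alpha=1/2$ when $n\ge2$, and for $\alpha=3/4$ when $n\ge3$; again the stated ranges give room for the constants. For $\Omega(2n+1,q)$ and $\mathrm{P\Omega}^\epsilon(2n,q)$, take a long root element (a Siegel-type element fixing a totally singular line). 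Its centralizer has the form $q^{a}\!:\!(\mathrm{SL}(2,q)\times\Omega(m-4,q))$ with $m$ the dimension. This gives exponent roughly $\dim T-(2m-5)$, and comparing with $\dim T\approx m^2/2$ yields the stated thresholds. Where \cite{Bamberg 2019} already records these values, we simply cite it.

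\emph{Part (3).} For the groups covered by (2), part (3) follows immediately. The remaining groups are $\PSL(2,q)$, $\PSL(3,q)$, $\mathrm{PSU}(3,q)$ and $\Omega(5,q)\cong\mathrm{PSp}(4,q)$, and the last is already covered. For $\PSL(2,q)$, take a unipotent element, with centralizer of order $q$ or $q\gcd(2,q-1)^{-1}\cdot\gcd(2,q-1)$. Since $|T|<q^3$, we get $q>|T|^{1/4}$. For $\PSL(3,q)$ and $\mathrm{PSU}(3,q)$, a transvection has centralizer order at least $q^3(q\mp1)/\gcd(3,q\mp1)$, and $|T|\le q^8$; these compare directly. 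The main obstacle is bookkeeping. We must handle the small-$q$ and small-rank boundary cases where the constant factors and gcd quotients could tip the inequality, and we must get the orthogonal centralizer orders exactly right. For this we would lean on \cite{Bamberg 2019}'s tabulated bounds and check finitely many small cases by direct computation.
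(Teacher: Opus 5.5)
Your treatment of $\Omega(2n+1,q)$ in part (1) fails with the element you chose. In $\Omega(2n+1,q)$ ($q$ odd) the long root (Siegel) elements number about $q^{4n-4}$: there are about $q^{4n-5}$ totally singular lines, each carrying $q-1$ such elements. So $|\C_T(x)|$ is about $q^{2n^2-3n+4}$; your estimate $\dim T-(2m-5)$ is one smaller still. Meanwhile $|T|^{3/4}$ is about $q^{\frac34(2n^2+n)}$. The inequality $2n^2-3n+4\ge\frac34(2n^2+n)$ is $2n^2-15n+16\ge0$, i.e.\ $n\ge7$. For $\Omega(9,q)$ you get $q^{24}$ against $q^{27}$, and $n=5,6$ fail too. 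No control of constants or small-case computation can rescue the claim that comparing with $\dim T\approx m^2/2$ ``yields the stated thresholds''. The missing idea is that in type $B_n$ the smallest nontrivial class is semisimple, not unipotent. Take $x$ to be the involution acting as $-1$ on a nondegenerate hyperplane $W$, with the type of $W$ chosen so that $x\in\Omega(2n+1,q)$ (types $t_n$, $t_n'$ of \cite{Tim}). Then $|\C_T(x)|=|{\rm SO}^{\mp}(2n,q)|=q^{n^2-n}(q^n\mp1)\prod_{i=1}^{n-1}(q^{2i}-1)$, of exponent $2n^2-n$. The comparison $2n^2-n\ge\frac34(2n^2+n)$ holds precisely for $n\ge4$, with one power of $q$ to spare at $n=4$. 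This is the element the paper uses. Elsewhere your long root element is fine. In type $D$ its exponent $2n^2-5n+6$ matches the paper's bound and beats $\frac34n(2n-1)$ for $n\ge9$ and $\frac12n(2n-1)$ for $n\ge4$. It also gives the $\frac12$-bound in type $B$ for $n\ge3$.

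Two further points. In the symplectic case, $2n^2-n\ge\frac34(2n^2+n)$ means $n\ge7/2$, not $n\ge3$; at $n=3$ it reads $15\ge15.75$. This slip is harmless, since (1) only asks for $n\ge4$. More seriously, the unitary case is not ``identical'' to the linear one. The threshold in (1) for ${\rm PSU}$ is $n\ge7$, not $n\ge8$, and at $n=7$ your inequality $n-1\ge\frac34(n+1)$ is an equality, leaving no slack for the constants. Put $d=\gcd(7,q+1)$, so the transvection centralizer has order $\frac1dq^{21}\prod_{i=1}^{5}(q^i-(-1)^i)$. Then $|\C_T(x)|^4/|T|^3=(1+q^{-1})^4c(q)/d$ with $c(q)\to1$. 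For $d=7$ this is below $1$; for $q=13$, $|\C_T(x)|\approx0.66\,|T|^{3/4}$. No other element helps. The only competitors of maximal centralizer dimension are the semisimple elements with centralizer ${\rm GU}(6,q)$, and these are smaller by the factor $1-q^{-6}$. The paper's ``straightforward to verify'' for $n\ge7$ skips the same boundary case. So (1) as stated genuinely fails for ${\rm PSU}(7,q)$ with $7\mid q+1$, and your argument should either exclude these groups or start the unitary range at $n\ge8$.
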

\begin{proof}
We take the same elements $x$'s as in the proof of \cite[Lemma 3.4]{Bamberg 2019} and verify that $|C_T(x)|$ satisfy the respective bounds here.  Let $q=p^f$ with $p$ prime and $f\ge 1$ in each case. We refer to the type of a conjugacy class of elements of prime order as in \cite{Tim}.

(a) Assume that $T=\PSL(n,q)$ with $n\ge 2$, and set $d=(n,q-1)$. Let $x\in T$ be an element of order $p$ with one Jordan block of size $2$ and $n-1$ Jordan blocks of size $1$ as in \cite[Table B.3, Row~1]{Tim}. We have
\[
 |\C_T(x)|=\frac{1}{d}q^{n(n-1)/2}\prod_{i=1}^{n-2}(q^i-1), \textup{ and } |T|=\frac{1}{d}q^{n(n-1)/2}\prod_{i=2}^{n}(q^i-1).
\]
It is straightforward to verify that $|C_T(x)|>|T|^{3/4}$ for $n\ge 8$, $|C_T(x)|>|T|^{1/2}$ for $n\ge 4$, and $|C_T(x)|>|T|^{1/4}$ for $n\ge 2$.

(b) Assume that $T={\rm PSU}(n,q)$ with $n\ge 3$, and set $d=(n,q+1)$. Let $x\in T$ be an element of order $p$ with one Jordan block of size $2$ and $n-1$ Jordan blocks of size $1$  as in \cite[Table B.4, Row~1]{Tim}.
Then, 
\[
|\C_T(x)|=\frac{1}{d}q^{n(n-1)/2}\prod_{i=1}^{n-2}(q^i-(-1)^i), \, \textup{ and } |T|=\frac{1}{d}q^{n(n-1)/2}\prod_{i=2}^{n}(q^i-(-1)^i).
\]
It is straightforward to verify that $|C_T(x)|>|T|^{3/4}$ for $n\ge7$, $|C_T(x)|>|T|^{1/2}$ for $n\ge4$, and $|C_T(x)|>|T|^{1/4}$ for $n\ge 3$.
 
(c) Assume that $T={\rm PSp}(2n,q)$ with $n\ge 2$, and set $d=(2,q-1)$. If $p>2$, take $x\in T$ to be an element of order $p$ with one Jordan block of size $2$ and $2(n-1)$ Jordan blocks of size $1$. If $p=2$, take $x$ to be an involution of type $b_1$ as in \cite[Table 3.4.1]{Tim}. We have  
\[
  |\C_T(x)|=\frac{1}{d}q^{n^2}\prod_{i=1}^{n-1}(q^{2i}-1), \, \textup{ and } |T|=\frac{1}{d}q^{n^2}\prod_{i=1}^{n}(q^{2i}-1).
\]
We can verify that $|\C_T(x)|>|T|^{1/2}$ always holds, and $|\C_T(x)|>|T|^{3/4}$ if $n\ge4$.
 
(d) Assume that $T={\rm \Omega}(2n+1,q)$ with $q$ odd and $n\ge 3$. Take
$x\in T$ to be an involution of type $t_n$ or $t_n'$  respectively as in \cite[Sections 3.5.2.1 and 3.5.2.2]{Tim}, according as $q\equiv 1\pmod 4$ or $q\equiv 3\pmod 4$.
Then  we have
\[
  |\C_T(x)|=q^{n^2-n}(q^n\mp 1)\prod_{i=1}^{n-1}(q^{2 i}-1), \, \textup{ and } |T|=\frac{1}{2}q^{n^2}\prod_{i=1}^n(q^{2i}-1).
\]
We can verify that $|\C_T(x)|>|T|^{1/2}$ always holds, and $|\C_T(x)|>|T|^{3/4}$ if $n\ge 4$.

(e) Assume that $T={\rm P\Omega}^{\epsilon}(2n,q)$ with  $n\ge 4$, and set $d=(4,q^n-\epsilon)$. Suppose that $p>2$ first. Take $x\in T$ to be an element  of order $p$ with one Jordan block of size $2(n-2)$ and two Jordan blocks of size $2$ (see \cite[Section 3.5.3]{Tim}). We have
\[
|\C_T(x)|\ge\frac{1}{8}q^{n^2-2}(q^2- 1)\prod_{i=1}^{n-3}(q^{2 i}-1), \, \textup{ and } |T|=\frac{1}{d}q^{n(n-1)}(q^n-\epsilon)\prod_{i=1}^{n-1}(q^{2i}-1).
\]
We can verify that $|\C_T(x)|>|T|^{1/2}$ always holds, and $|\C_T(x)|>|T|^{3/4}$ if $n\ge 9$. Suppose that $p=2$. Take $x\in T$ to be an involution of type $a_2$ as in \cite[Table 3.5.1]{Tim}. Then,
\[
|\C_T(x)|\ge \frac{1}{4}q^{n^2-2}(q^2- 1)\prod_{i=1}^{n-3}(q^{2 i}-1), \, \textup{ and } |T|=q^{n(n-1)}(q^n-\epsilon)\prod_{i=1}^{n-1}(q^{2i}-1).
\]
We  also have $|\C_T(x)|>|T|^{1/2}$  for $n\ge 4$, and $|\C_T(x)|>|T|^{3/4}$ if $n\ge 9$.  This completes the proof.
\end{proof}

\begin{lemma}\label{lem:excep}
If $T$ is a finite simple group of exceptional Lie type, then
\begin{enumerate}[(1)]
\item there exists $x\in T\setminus\{1\}$ such that $|\C_T(x)|>|T|^{1/4}$;
\item there exists $x\in T\setminus\{1\}$ such that $|\C_T(x)|>|T|^{1/2}$ if $T$ is not of type ${}^2{\rm B}_2$ or ${}^2{\rm G}_2$;
\item there exists $x\in T\setminus\{1\}$ such that $|\C_T(x)|>|T|^{3/4}$ if $T$ has type ${\rm E}_8$.
\end{enumerate}
\end{lemma}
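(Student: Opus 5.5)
We follow the pattern of Lemmas \ref{lem:alt} and \ref{lem:clssim}: for each family of exceptional groups we exhibit one explicit nontrivial element $x$ with a large centralizer, where $|\C_T(x)|$ is read off from the standard class/centralizer tables (e.g.\ the long root elements, or the involution centralizers of \cite{Tim}). We then compare $|\C_T(x)|$ with the known order formula $|T|=\frac1d q^N\prod_i (q^{d_i}-1)$. Since both orders are monic-type polynomials in $q$ up to bounded constants, every comparison reduces to comparing degrees in $q$. The leading degree of $|T|$ is $\dim T$. For $\C_T(x)$ we use the crude bounds $\tfrac12 q^{a}\le \prod (q^{m}\pm1)\le 2q^{a}$ for the relevant finite products, and we verify small $q$ by hand.

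For (1) and (2), take $x$ a long root element (an element of order $p$ in a long root subgroup). Its centralizer is $q^{a}\!:\!L$, where $L$ is the Levi factor of the parabolic stabilizing the highest root. Its order has $q$-degree $\dim T - \dim(\text{orbit of highest root vector})$, and the minimal orbit has dimension $2h^\vee-2$. Concretely:
\begin{itemize}
\item[] $G_2$: $\dim$ $14$, $|\C|\approx q^{6}$ (degree $14-6=8$);
\item[] $F_4$: $52$, degree $52-16=36$;
\item[] $E_6$: $78$, degree $78-22=56$;
\item[] $E_7$: $133$, degree $133-34=99$;
\item[] $E_8$: $248$, degree $248-58=190$;
\item[] ${}^3D_4$: degree $28-10=18$;
\item[] ${}^2E_6$: $56$;
\item[] ${}^2F_4$: $26-$ about $8=18$ or so.
\end{itemize}
In each case the ratio of the centralizer degree to $\dim T$ exceeds $1/2$. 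So $|\C_T(x)|>|T|^{1/2}$ for all $q$, after checking constants such as $\frac1d$ and the factors $(q^{m}\pm1)$ for the smallest $q$. This gives (2) and hence (1) for these types.

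For $E_8$ we have $190/248>3/4$, since $186=\tfrac34\cdot 248$. The margin is $q^{4}$, which easily absorbs the constants, so (3) follows from the same element.

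For the Suzuki and Ree groups ${}^2B_2(q)$ and ${}^2G_2(q)$, only (1) is claimed. In ${}^2B_2(q)$ we have $|T|=q^2(q^2+1)(q-1)$, and an involution has centralizer of order $q^2$; since $q^2>(q^5)^{1/4}$, this suffices. In ${}^2G_2(q)$ we have $|T|=q^3(q^3+1)(q-1)$, and an involution centralizer is $2\times \PSL(2,q)$, of order $q(q^2-1)$; this exceeds $|T|^{1/4}\approx q^{7/4}$. The Tits group was already handled in Lemma \ref{lem:spora}, and ${}^2F_4(q)$ with $q>2$ is covered by the root-element argument.

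The main obstacle is making the constant-factor comparisons rigorous for the smallest $q$. The most delicate cases are $G_2(3)$, $G_2(4)$, ${}^2F_4(8)$ and the twisted groups, where the degree margin is smallest. There we would instead verify directly from the ATLAS or the explicit polynomial orders, or switch to an involution centralizer if it is larger.
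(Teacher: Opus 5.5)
Your overall strategy works, and it is more self-contained than the paper's, but one family is handled incorrectly.

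\textbf{Comparison with the paper.} The paper computes centralizers only where it must.
\begin{itemize}
\item For (1) it cites \cite[Lemma 3.3(ii)]{Bamberg 2019}: some $x$ has $|C_T(x)|\ge|T|^{2/5}$ unless $T$ is of type ${}^2{\rm B}_2$. It treats ${}^2{\rm B}_2(q)$ via the involution centralizer of order $q^2$, as you do.
\item For (2) it cites \cite[Lemma 3.3(i)]{Bamberg 2019} ($|C_T(x)|>|T|^{3/5}$) for ${\rm E}_7,{\rm E}_6,{}^2{\rm E}_6,{\rm F}_4,{}^3{\rm D}_4$. It computes explicitly only for ${\rm G}_2$ and ${}^2{\rm F}_4$; for ${\rm G}_2$ it uses the $A_1$ class, which is your long root element.
\item For (3) it uses the $A_1$ (long root) element of ${\rm E}_8(q)$, which is exactly your degree count $190>186$.
\end{itemize}
Your uniform count of $q$-degrees, $\dim T-(2h^\vee-2)$ for long root elements, avoids the citation and explains why the exponents come out right. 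It also supplies an explicit element for ${}^2{\rm G}_2$ in (1). The paper's route is shorter because the cited bounds are stronger than needed.

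\textbf{The error: ${}^2{\rm F}_4(q)$.} The Frobenius map defining this group interchanges long and short root subgroups of ${\rm F}_4$. So the class of long root elements (the minimal orbit) is not stable under it and meets ${}^2{\rm F}_4(q)$ in no elements. Your ``$26-$ about $8=18$'' therefore has no basis. In fact no nontrivial element of ${}^2{\rm F}_4(q)$ has a centralizer of order about $q^{18}$.

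The smallest stable unipotent class is $(\tilde{A}_1)_2$, whose centralizer in ${\rm F}_4$ has dimension $30$. This is the element the paper takes, with
\[
|C_T(x)|=q^{10}|{}^2{\rm B}_2(q)|=q^{12}(q^2+1)(q-1),
\]
of $q$-degree $15$. By contrast, $|T|=q^{12}(q^6+1)(q^4-1)(q^3+1)(q-1)$ has degree $26$. Since $15>13$ and $q\ge 8$ (the case $q=2$ is the Tits group, already handled in Lemma~\ref{lem:spora}), the margin absorbs the constants and $|C_T(x)|>|T|^{1/2}$. So (2), and hence (1), holds for this family once the element is corrected. The rest of your case analysis, including the small-$q$ checks you flag, goes through.
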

\begin{proof}
(1) By \cite[Lemma 3.3 (ii)]{Bamberg 2019}, there is some  $x\in T\setminus\{1\}$ such that  $|\C_T(x)|\ge |T|^{2/5}$  if and only if $T$ does not have type ${}^2{\rm B}_2$. If $T\cong {}^2{\rm B}_2(q)$ with $q=2^{2m+1}$ for some $m\ge 1$, then  it has a unique conjugacy class of involutions, and for an involution $x$ in $T$, we have  $|C_T(x)|=q^2$ by \cite{Suzuki}. It is readily checked that $|C_T(x)|=q^2>|T|^{1/4}$ in this case.

(2) If $T$ has type ${\rm E}_7$, $\rm{E}_6$, ${}^2E_6$, ${\rm F}_4$ or ${}^3{\rm D}_4$, there exists $x\in T\setminus\{1\}$ such that $|\C_T(x)|>|T|^{3/5}$ by \cite[Lemma 3.3 (i)]{Bamberg 2019}. If $T$ has type ${}^2{\rm F}_4$ or ${\rm G}_2$, take $x$  to be a unipotent element of type $(\tilde{A_1})_2$, $A_1$ in \cite[Tables 22.2.5, 22.2.6]{Liebeck}, so that $|\C_T(x)|=q^{10}|{}^2{\rm B}_2(q)$,  $q^5|\PSL(2,q)|$. We then check that $|\C_T(x)|> |T|^{1/2}$ holds in both cases.

(3) If $T\cong {\rm E}_8(q)$, we take $x\in T$ to be a uniponent element of type $A_1$ as in \cite[Table 22.2.1]{Liebeck}. Then we have
\[
|\C_T(x)|\ge\frac{1}{2}q^{120}\prod_{i\in\{2,6,8,10,12,14,18\}}(q^i-1),\,\textup{ and } |T|=q^{120}\prod_{i\in\{2,8,12,14,18,20,24,30\}}(q^i-1).
\]
One can readily check that $|\C_T(x)|>|T|^{3/4}$. This completes the proof.
\end{proof}

\noindent\textbf{Proof of Theorem \ref{thm_PtPrim}:} Let $\cS$ be a finite thick generalized quadrangle, and suppose that it has a  point-primitive automorphism group $G$. The action of $G$ on the point set does not have O'Nan-Scott  type HC by \cite[Theorem 1.1]{Bamberg 2019}.  We have shown that it does not have O'Nan-Scott type HS in Theorem \ref{thm_HSnonext}. Suppose that $G$ has type SD or CD, and $\textup{soc}(G)=(T^k)^r$, where $T$ is a finite nonabelian simple group, $k\ge 2$ and $r\ge1$. First assume that $r=1$, so that $G$ has type SD. The same arguments as in the proof of Theorem \ref{thm_HSnonext} exclude the case $k=2$. If $k\ge 3$, we have  $|\C_T(x)|<|T|^{3/4}$ for any $x\in T\setminus\{1\}$ by Lemma \ref{lem:SD_2/3}. We then use the results in Lemmas \ref{lem:spora}-\ref{lem:excep} to obtain the list of potential $T$'s in Table \ref{tab:1}. If $r>1$ and $G$ has type CD, then $r\le 3$ and $|\C_T(x)|<|T|^{1-r/4}$ for any $x\in T\setminus\{1\}$ by Lemma \ref{lem:SD_2/3}. We obtain the list of potential $T$'s in Table \ref{tab:1} similarly by using Lemmas \ref{lem:spora}-\ref{lem:excep}. This completes the proof.\eproof
\vspace*{10pt}

\section{Concluding remarks}
In this paper, we introduce the notion of multipliers for a Singer quadrangle and study their basic properties. As an application, we show that a point-primitive automorphism group of a thick generalized quadrangle cannot have O'Nan-Scott type HS (holomorph simple) which answers an open problem in \cite{Bamberg 2019}. We also exclude some potential point-primitive groups of type SD (simple diagonal) and CD (compound diagonal) listed in \cite[Theorem 1.1]{Bamberg 2019}. Therefore, the notion of multipliers provides a new perspective in the study of highly transitive generalized quadrangles with a point-regular automorphism group. There is a rich theory on the multiplier group of projective planes with a Singer group, and it is desirable to develop an analogous theory for Singer quadrangles and find new applications.\\

\noindent\textbf{Acknowledgements.} This research was supported by National Key R$\&$D Program of China under grant number 2025YFA1017700, NSFC grants No. 12225110 and 12461061.

\begin{center}
	\scriptsize
		\linespread{0.5}

\begin{thebibliography}{10}




\bibitem{Bamberg2021}
J. Bamberg, J.~P. Evans, 
\newblock No sporadic almost simple group acts primitively on the points of a generalised quadrangle, 
\newblock{\em Discrete Math.} {\bf 344} (2021), no.~4, Paper No. 112291.

\bibitem{Bamberg2011}
J. Bamberg, M. Giudici,
\newblock Point regular groups of automorphisms of generalised quadrangles,
\newblock {\em J. Combin. Theory Ser. A} {\bf 118} (2011), no.~3, 1114--1128.





\bibitem{Bamberg2012}
J. Bamberg, M. Giudici, J. Morris, G. F. Royle, P. Spiga,
\newblock Generalised quadrangles with a group of automorphisms acting primitively on points and lines,
\newblock {\em J.Combin. Theory Ser. A}  {\bf 119} (2012), no.~7, 1479--1499.



\bibitem{Bamberg2016}
J. Bamberg, S. P. Glasby, T. Popiel, C. E. Praeger,
\newblock Generalized quadrangles and transitive pseudo-hyperovals,
\newblock {\em J. Combin. Des.} {\bf 24} (2016), no.~4, 151--164.



\bibitem{Bamberg Popiel2017}
J. Bamberg, T. Popiel, C. E. Praeger,
\newblock Point-primitive, line-transitive generalised quadrangles of holomorph type,
\newblock {\em J. Group Theory} {\bf 20} (2017), no.~2, 269--287.



\bibitem{Bamberg 2019}
J. Bamberg, T. Popiel, C. E. Praeger,
\newblock Simple groups, product actions, and generalized quadrangles,
\newblock{\em Nagoya Math. J.} {\bf 234} (2019), 87--126.


\bibitem{Burness}
 T. C. Burness, E. Covato,
 \newblock On the involution fixity of simple groups,
 \newblock{\em Proc. Edinb. Math. Soc.} {\bf 64} (2021), no.~2, 408--426.



\bibitem{ATLAS}
J. H. Conway, R. T. Curtis, S. P. Norton, R. A. Parker, R. A. Wilson,
\newblock  ATLAS of finite groups,
\newblock{\em Oxford University Press,} Oxford, 1985.





\bibitem{Hall47}M. Hall Jr., 
\newblock Cyclic projective planes,
\newblock{\em Duke Math. J.} {\bf 14} (1947), 1079--1090.


\bibitem{Tim}
T.~C. Burness, M. Giudici, 
\newblock {\it Classical groups, Derangements and Primes}, \newblock Australian Mathematical Society Lecture Series, 25, Cambridge Univ. Press, Cambridge, 2016.






\bibitem{Feng}
T. Feng,
\newblock
On finite generalized quadrangles of even order,
\newblock {\em Adv. Math.} {\bf 429} (2023), Paper No. 109181.


\bibitem{FengLi}
T. Feng, W. Li,
\newblock The point regular automorphism groups of the Payne derived quadrangle of $W(q)$,
\newblock{\em J. Combin. Theory Ser. A} {\bf 179} (2021), Paper No. 105384.

\bibitem{Ho89}
C.~Y. Ho,
\newblock{On multiplier groups of finite cyclic planes,}
\newblock{\em J. Algebra} {\bf 122} (1989), no.~1, 250--259.

\bibitem{Ho93}
C.~Y. Ho,
\newblock{Planar Singer groups and groups of multipliers,}
\newblock{\em S\=urikaisekikenky\=usho K\ Boky\=uroku} no. 840 (1993), 65--69.





\bibitem{HoPAMS}
C.~Y. Ho, 
\newblock{Singer groups, an approach from a group of multipliers of even order,}
\newblock{\em Proc. Amer. Math. Soc.} {\bf 119} (1993), no.~3, 925--930.


\bibitem{Ho93basic}
C.~Y. Ho, 
\newblock{Some basic properties of planar Singer groups,}
\newblock{\em Geom. Dedicata} {\bf 55} (1995), no.~1, 59--70.

\bibitem{Lu}
T. Feng, J. Lu,
 \newblock{ On finite generalized quadrangles with  $\PSL(2,q)$  as an automorphism group},
 \newblock{\em Des. Codes Cryptogr.}  {\bf 91} (2023), no.~6, 2347--2364.

\bibitem{Ghinelli}
D. Ghinelli,
\newblock Regular groups on generalized quadrangles and nonabelian difference sets with multiplier $-1$,
\newblock {\em Geom. Dedicata} {\bf 41} (1992), no.~2, 165--174.


\bibitem{Ott}U. Ott, 
\newblock{On generalized quadrangles with a group of automorphisms acting regularly on the point set, difference sets with $-1$ as multiplier and a conjecture of Ghinelli}, \newblock{\em European J. Combin.} {\bf 110} (2023), Paper No. 103681.


\bibitem{Liebeck}
M. W. Liebeck, G. M. Seitz,
\newblock {\it Unipotent and Nilpotent Classes in Simple Algebraic
Groups and Lie Algebras},
\newblock Mathematical Surveys and Monographs 180, American
Mathematical Society, Providence, RI, 2012.



\bibitem{Zou}
J. Lu, Y. Zhang, H. Zou,
\newblock{Nonexistence of generalized quadrangles admitting a point-primitive and line-primitive automorphism group with socle $\rm{PSU}(3,q)$, $q\geq 3$},
\newblock{\em J. Algebraic Combin. }
 {\bf 60} (2024), no.~3, 871--898.













\bibitem{Liebeck}
 M. W. Liebeck, A. Shalev,
 \newblock On fixed points of elements in primitive permutation groups,
 \newblock{\em J. Algebra}  {\bf 421} (2015), 438--459.






\bibitem{Payne}
S. E. Payne, J. A. Thas,
\newblock {\em Finite Generalized Quadrangles},
\newblock second ed., EMS Ser. Lect. Math., European Mathematical Society (EMS), Z{\"u}rich, 2009.

\bibitem{Praeger}
 C. E. Praeger,
 \newblock Finite quasiprimitive graphs, in: Surveys in Combinatorics,
 \newblock{ \em University of Western Australia. Department of Mathematics} (1996) 65--85.

\bibitem{Suzuki}
M. Suzuki, 
\newblock On a class of doubly transitive groups, 
\newblock{\em Ann. of Math. } (2) {\bf 75} (1962), 105--145.


\bibitem{Swartz}
E. Swartz,
\newblock On generalized quadrangles with a point regular group of automorphisms,
\newblock {\em European J. Combin.} {\bf 79} (2019), 60--74.


\bibitem{Winter&Thas}
S. De~Winter and K. Thas,
\newblock Generalized quadrangles with an abelian Singer group,
\newblock {\em Des. Codes Cryptogr. }{\bf 39} (2006), no.~1, 81--87.


\bibitem{Winter&Thas2}
S. De~Winter and K. Thas,
\newblock Generalized quadrangles admitting a sharply transitive Heisenberg group,
\newblock {\em Des. Codes Cryptogr.} {\bf 47} (2008), no.~1-3, 237--242.


\bibitem{WTSSinger} S. De~Winter, K. Thas, E.E. Shult, Singer Quadrangles, Oberwolfach Preprint OWP, 2009-07.

\bibitem{Yoshiara}
S. Yoshiara,
A generalized quadrangle with an automorphism group acting regularly on the points,
{\em European J. Combin.} {\bf 28} (2007), no.~2, 653--664.





\end{thebibliography}
	\bibliographystyle{plain}

\end{center}

\end{document}